\documentclass[11pt]{article}
\usepackage[utf8]{inputenc}
\usepackage[T1]{fontenc}
\usepackage[english]{babel}
\usepackage{amssymb}
\usepackage{amsmath}
\usepackage{amsthm}
\usepackage{amsfonts}
\usepackage{esint}
\usepackage{dsfont}
\usepackage{wasysym}
\usepackage{bbm}
\usepackage[all]{xy}
\usepackage{hyperref}
\usepackage{caption}
\usepackage{enumitem}
\usepackage{graphicx}
\usepackage{tikz-cd}
\usepackage{tkz-tab}
\usetikzlibrary{decorations.markings,decorations.pathreplacing,positioning,cd}
\usepackage{graphicx}

\usepackage{stmaryrd}

\newtheorem{theorem}{Theorem}[section]

\newtheorem{assumption}[theorem]{Assumption}
\newtheorem{lemma}[theorem]{Lemma}

\newtheorem{proposition}[theorem]{Proposition}

\theoremstyle{definition}
\newtheorem{definition}[theorem]{Definition}

\theoremstyle{remark}

\newtheorem*{remark*}{Remark}

\newcommand{\R}{\mathbb{R}}

\newcommand{\N}{\mathbb{N}}

\newcommand{\Proba}[1]{\mathbb{P}\left( #1 \right)}
\newcommand{\norm}[2]{\left\Vert #1 \right\Vert_{#2}}

\newcommand{\scal}[2]{\left<#1,#2\right>}
\newcommand{\chem}{\text{chem}}
\newcommand{\connects}{\longleftrightarrow}

\title{Shadow and percolation III: chemical distance in continuous landscapes with correlations}

\author{David Vernotte}

\begin{document}
\maketitle

\begin{abstract}
    We study some geometric properties of the excursion set of a slope field $\alpha$ associated to a smooth, planar, centered, Gaussian field $f$. That is, we consider the set $\{\alpha\leq \ell\}$ when $\ell\in \R$ is in the supercritical regime. We show that for almost all such $\ell$, in the sense of the Lebesgue measure, then with high probability the chemical distance between two points connected within $\{\alpha\leq \ell\}$ is comparable to the Euclidean distance between those two points. This result is in the spirit of the Antal and Pisztora theorem \cite{AP96} for Bernoulli percolation. However many new difficulties arise such as the fact that $\alpha$ is a continuous field (not differentiable everywhere) with long range correlations and whose law is still not well understood.
\end{abstract}

\tableofcontents

\section{Introduction}
In this paper, we study some percolation properties of the slope field $\alpha$ associated to some smooth planar Gaussian field $f$. More precisely, we consider $W$ a white noise on $\R^2$. The white noise $W$ is a random centered Gaussian field indexed by functions of $L^2(\R^2)$ such that for any $\varphi_1,\varphi_2\in L^2(\R^2)$ then the covariance between $W(\varphi_1)$ and $W(\varphi_2)$ is simply the scalar product of $\varphi_1$ and $\varphi_2$ in the Hilbert space $L^2(\R^2)$ (we refer to \cite{Jan97} for an introduction of such objects). Let $q:\R^2\to \R$ be a function satisfying the following assumptions.
\begin{assumption}
\label{a:a1}
There exists $\beta>\frac{5}{2}$ such that the following holds.
\begin{enumerate}
    \item The function $q$ is in $\mathcal{C}^4(\R^2)$ and for any $\alpha=(\alpha_1,\alpha_2)\in \mathbb{N}^2$ with $\alpha_1+\alpha_2\leq 4$ then $\partial^\alpha q$ is in $L^2(\R^2).$
    \item We have for any $z\in \R^2$, $q(z)=q(-z).$
    \item There exists a constant $C>0$ such that for any $\alpha=(\alpha_1,\alpha_2)\in \mathbb{N}^2$ with $\alpha_1+\alpha_2\leq 2$ and for any $\norm{z}{}\geq 1$ we have
    $$\norm{\partial^\alpha q(z)}{}\leq \frac{C}{\norm{z}{}^\beta}.$$
    \item We have $(q\ast q)(0)>0$ where $\ast$ denotes convolution (with $q(z)=q(-z)$ this is equivalent to asking that $q$ is not identically equal to $0$.)
\end{enumerate}
\end{assumption}
In the following we let $q$ be a function satisfying Assumption \ref{a:a1}. We denote by $f : \R^2 \to \R$ the following random function.
\begin{equation}
    \forall z\in \R^2,\ f(z) := (q\ast W)(z)=W(q(\cdot-z)).
\end{equation}
It appears that $f$ is a Gaussian field on $\R^2$ which is stationary, centered and whose covariance matrix is given by
\begin{equation}
    \forall z_1,z_2\in \R^2,\ K(z_1,z_2)=\mathbb{E}[f(z_1)f(z_2)]=(q\ast q)(z_1-z_2).
\end{equation}
Since $q$ is $\mathcal{C}^4$ we may assume that, almost surely, $f$ is of class $\mathcal{C}^2$ (up to a modification of the ambient probability space). When $g : \R^n \to \R^m$ is a $\mathcal{C}^1$ function, we define $\tau_g : \R_+\times \R^n\to \R^m$ by
\begin{equation}
    \label{eq:def_tau}
    \tau_g(t,z) = \begin{cases}\frac{g(z+te_1)-g(z)}{t} & \text{ if $t>0$} \\
\frac{\partial g}{\partial z_1}(z) & \text{ if $t=0$}\end{cases}.
\end{equation}

We now introduce $\alpha$ the slope field associated to $f$. It is defined for $z\in \R^2$ by
\begin{equation}
    \label{eq:def_alpha}
  \alpha(z) := \sup_{t\in \R_+}\tau_f(t,z)=\sup_{t>0}\frac{f(z+te_1)-f(z)}{t},  
\end{equation}
where $e_1=(1,0)$ denotes the first vector of canonical basis of the Euclidean space $\R^2$. This field was already introduced and studied in \cite{Shadow1}, \cite{Shadow2}. If we interpret the function $f$ as a landscape of mountains and we imagine a sun at infinity in the direction $e_1$ emitting parallel rays of light making a slope $\ell\in \R$ with the horizontal plane, then $\alpha(z)$ is the minimal slope needed so that an observer located at $z$ (and of height $f(z)$) may see the sun (the light been blocked by other mountains between $z$ and the sun). An object of interest is therefore the \textit{excursion set} $\{\alpha\leq \ell\}$ which represents the set of all points that are lit by the sun when its rays make a slope $\ell$ with the horizontal plane.
\begin{definition}
Let $\ell\in \mathbb{R}$ be a real parameter called the \textit{level}. The \textit{excursion set at level $\ell$} is the random subset of $\R^2$ given by
\begin{equation}
    \{\alpha\leq \ell\}:= \{z\in \R^2\ |\ \alpha(z)\leq \ell\}.
\end{equation}
We say that $\alpha$ \textit{strongly percolates} at level $\ell$ if the probabilities of the existence of a connected component in $\{\alpha\leq \ell\}$ crossing vertical and horizontal rectangles become close to $1$ when the rectangles are large enough. More precisely, we say that $\alpha$ strongly percolates at level $\ell$ if for any non-degenerated rectangle $\mathcal{R}=[a,b]\times [c,d]\subset \R^2$ we have
\begin{equation}
    \label{eq:strongly_percolates_1}
    \min\left(\Proba{\text{Cross}^{\{\alpha\leq \ell\}}_h(\lambda \mathcal{R})}, \Proba{\text{Cross}^{\{\alpha\leq \ell\}}_v(\lambda \mathcal{R})}\right) \xrightarrow[\lambda \to \infty]{}1,
\end{equation}
where $\text{Cross}_h^{\{\alpha\leq \ell\}}(\mathcal{R})$ (resp. $\text{Cross}_v^{\{\alpha\leq \ell\}}(\mathcal{R})$) denotes the event that there exists a connected component of $\{\alpha\leq \ell\}\cap \mathcal{R}$ that intersects both vertical (resp. horizontal) sides of $\mathcal{R}$.
\end{definition}
It appears that the set of $\ell\in \R$ such that $\alpha$ strongly percolates at level $\ell$ is an interval of $\R$ which could a priori be degenerated. However, results in \cite{Shadow2} show that this interval is actually of the form $[\ell_c,\infty[$ or $]\ell_c,\infty[$ for some $\ell_c\in ]0,\infty[$.
\begin{theorem}[\cite{Shadow2}]
Under Assumption \ref{a:a1}, there exists $\ell_c\in ]0,\infty[$ such that for any $\ell>\ell_c$ then $\alpha$ strongly percolates at level $\ell$ and when $\ell<\ell_c$ then $\alpha$ does not strongly percolates at level $\ell$.
\end{theorem}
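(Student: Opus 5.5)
The plan has three parts. First, the set of percolating levels is an up-set. Second, $\ell_c>0$. Third, $\ell_c<\infty$, which is where a sharp-threshold argument is needed.

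\textbf{Monotonicity.} The excursion sets are increasing in $\ell$: if $\ell\le \ell'$ then $\{\alpha\leq \ell\}\subset\{\alpha\leq \ell'\}$. Hence crossing events are increasing in $\ell$, and the set $I$ of levels at which $\alpha$ strongly percolates is an up-set. So $I$ is an interval unbounded above, and we set $\ell_c:=\inf I\in[-\infty,\infty]$. It remains to show $0<\ell_c<\infty$.

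\textbf{Lower bound $\ell_c>0$.} Take $\ell\le 0$. The point $z$ lies in $\{\alpha\le 0\}$ iff $f(z+te_1)\le f(z)$ for all $t>0$, i.e. $f(z)$ dominates everything to its right on the horizontal line. By stationarity and decorrelation ($\beta>5/2$), along a horizontal segment of length $L$ the field a.s. exceeds $f(z)$ somewhere, with probability tending to $1$ as $L\to\infty$; quantitatively, the probability that $z$ sees the sun at slope $\ell \le 0$ through distance $L$ tends to $0$. Consequently $\{\alpha\le 0\}$ has vanishing density, and a first-moment or union bound over a discretization (using $\mathcal C^2$ regularity to control $f$ between grid points) rules out crossings of long rectangles with high probability. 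To get strict positivity I would perturb to small $\ell=\varepsilon>0$. Then $\{\alpha\le\varepsilon\}\subset\{f(z)\ge \sup_{0<t\le L} f(z+te_1)-\varepsilon L\}$, and choosing $L$ fixed and $\varepsilon$ small keeps the density of this set small. A Peierls/finite-energy-free duality argument then shows vertical crossings of $\{\alpha>\varepsilon\}$ exist with probability bounded below, so strong percolation fails. Here one can compare to a $1$-dependent percolation after truncating $q$ with quasi-independence estimates, using $\beta>5/2$.

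\textbf{Upper bound $\ell_c<\infty$.} For large $\ell$, the event $\alpha(z)>\ell$ forces $f(z+te_1)-f(z)>\ell t$ for some $t$. For $t\le 1$ this requires $\sup|\partial_1 f|>\ell$ locally, and for $t\ge1$ it requires $f$ to be large somewhere at distance $t$, with $\ell t$ growing linearly. Gaussian tails plus a union bound over $t\in\N$ show $\Proba{\alpha(z)>\ell}$ decays like $e^{-c\ell^2}$, uniformly in stationarity. Truncating the dependence to range $R$ via the white-noise representation ($f=q\chi_R\ast W+$ small error), the bad set is dominated by a finitely dependent field of tiny density. By Liggett–Schonmann–Stacey domination, crossings of $\{\alpha\le\ell\}$ occur with probability tending to $1$.

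\textbf{Main obstacle.} I expect the main difficulty to be showing that strong percolation holds for every $\ell>\ell_c$, rather than merely some dichotomy. This needs a sharp-threshold argument: an RSW-type statement for $\alpha$ combined with a KKL/OSSS inequality on a discretized white noise. The complication is that $\alpha$ is a non-local supremum functional, so the influences and the quasi-independence must be controlled through the decay of $q$. I would cite \cite{Shadow2}'s framework for this step.
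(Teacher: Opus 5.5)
The paper does not prove this theorem; it quotes it from \cite{Shadow2}, so your proposal has to stand on its own. Your monotonicity step is correct, and it settles more than you use. With $\ell_c:=\inf I$ and $I$ an up-set, both halves of the dichotomy are tautological, so no sharp-threshold input (RSW, OSSS, KKL) is needed, and your ``main obstacle'' is misplaced. The whole content is $0<\ell_c<\infty$. Both of your arguments for it rest on the same unjustified step: replacing the bad set by a finitely dependent field of small density.

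For $\ell_c>0$ this step fails quantitatively. Your superset $\{z:\ \sup_{0<t\le L}f(z+te_1)\le f(z)+\varepsilon L\}$ is sparse only when $L$ is large. But whether a given box meets it depends on $f$ along a horizontal segment of length $L$. Truncating $q$ removes the polynomial tail of the correlations, not this built-in dependence along $e_1$, so the truncated field is $L$-dependent, not $1$-dependent. Meanwhile the probability that a unit box meets the set decays only polynomially, heuristically like $1/L$: it asks for a point where $f$ beats its own maximum over the next window of length $L$. In a Liggett--Schonmann--Stacey comparison the dependency degree is then at least of order $L$, so the marginal is not below the required threshold; at best the two effects balance up to constants you do not control. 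A Peierls count, which needs marginals exponentially small in the size of a dependency window, fails even more clearly. So no dual crossing of $\{\alpha>\varepsilon\}$ follows, and an additional idea is needed, such as the record structure of lit points along horizontal lines or a multi-scale argument.

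For $\ell_c<\infty$, the tail bound $\Proba{\alpha(z)>\ell}\le Ce^{-c\ell^2}$ is fine. The issue is that the comparison $\alpha\approx\alpha_R$ of Proposition \ref{prop:alpha_R_good_approximation} holds only off an error event that is itself non-local. If you discard it on all $\asymp\lambda^2$ unit boxes of $\lambda\mathcal{R}$ by a union bound, you need $R\gtrsim(\log\lambda)^{1/b}$. Then the dependence range grows with $\lambda$ while the marginal $e^{-c\ell^2}$ stays fixed, and the LSS threshold, of order $R^{-2}$, is eventually violated. If instead you keep $R$ fixed, you have no control on many error events occurring together along a long dual path.

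This is what the multi-scale renormalization of \cite{Shadow2} is built for; it is reused here in the proof of Proposition \ref{prop:global_struct_high_prob}. It uses truncations $\alpha_{\lambda_n}$ at growing scales with tolerances $\min(\ell''-\ell',1)2^{-n}$, encoded in the events $\mathcal{B}^{(n)}_u$, whose failure probabilities beat the combinatorics at every scale. For large $\ell$ the seed event at a fixed scale $\lambda_0$ is easy to make likely. Without such a scheme, or a Gaussian-concentration bound on the simultaneous occurrence of many error events, neither bound on $\ell_c$ is established.
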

Moreover, a consequence of the methods and arguments used in \cite{Shadow2} is the following
\begin{proposition}
\label{prop:high_prob_cross} Suppose that Assumption \ref{a:a1} is verified.
Let $\ell>\ell_c$ and $\mathcal{R}=[a,b]\times [c,d]$ be a non-degenerated rectangle. There exist positive constants $c,C$ depending on $\mathcal{R}$ and $\ell$ such that
\begin{equation}
    \label{eq:strongly_percolates_2}
\forall \lambda \geq 1,\ 
    \min\left(\Proba{\emph{Cross}^{\{\alpha\leq \ell\}}_h(\lambda \mathcal{R})}, \Proba{\emph{Cross}^{\{\alpha\leq \ell\}}_v(\lambda \mathcal{R})}\right) \geq 1-Ce^{-c\lambda}.
\end{equation}

\end{proposition}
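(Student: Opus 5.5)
The plan is to bootstrap the qualitative crossing estimate of the theorem of \cite{Shadow2} (strong percolation at any $\ell'>\ell_c$) into an exponential one, by a renormalization scheme that uses a little slack in the level. Fix $\ell>\ell_c$ and pick $\ell_c<\ell'<\ell$.

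\textbf{Step 1: a one-scale estimate for a localized field.} For $r\ge 1$, truncate $q$ to a function $q_r$ supported in a ball of radius $r$ (via a smooth cutoff), set $f_r=q_r\ast W$, and let $\alpha_r$ be the slope field of $f_r$ with the supremum in $t$ restricted to $t\le r$. Then $\alpha_r(z)$ depends only on $W$ restricted to a $2r$-neighbourhood of the horizontal segment $[z,z+re_1]$. Hence the crossing events for $\{\alpha_r\le \ell\}$ in boxes at mutual distance larger than $4r$ are independent.

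\textbf{Step 2: comparing $\alpha$ and $\alpha_r$.} I will show that, on a box of size $\lambda$, with probability at least $1-Ce^{-c\lambda}$ one has both $\{\alpha_r\le \ell'\}\subset\{\alpha\le \ell\}$ and $\{\alpha\le\ell'\}\subset\{\alpha_r\le \ell\}$ there, for $r=r(\lambda)$ chosen suitably.
\begin{itemize}
\item For $t\le r$, the difference $\tau_f-\tau_{f_r}$ is controlled by $\|f-f_r\|_{\mathcal C^1}$ on the box. Since $f-f_r$ has derivative variance $O(r^{2-2\beta})$ by Assumption \ref{a:a1}, Borell--TIS together with a union bound over the box gives this control.
\item For $t>r$, the quotient $(f(z+te_1)-f(z))/t$ is at most $2\sup|f|/r$ over an enlarged region. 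This is small with overwhelming probability, because $\sup|f|$ grows only like $\sqrt{\log}$ of the area.
\item Since $\ell>0$ (as $\ell_c>0$), choosing $r=\lambda^{\epsilon}$ makes all these errors smaller than $\ell-\ell'$ with probability $1-Ce^{-c\lambda}$, provided the area-dependence stays polynomial.
\end{itemize}

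\textbf{Step 3: renormalization.} Using $\ell'>\ell_c$ and Step 2 at a fixed large scale, for any $\eta>0$ I find $L$ such that the event ``$\{\alpha_{L/10}\le\ell''\}$ crosses the $L\times 3L$ and $3L\times L$ rectangles around an $L$-box in the long direction'' has probability at least $1-\eta$. Here $\ell''\in(\ell',\ell)$ is an intermediate level. I then call an $L$-box good when this event holds. Good boxes form a finite-range dependent site percolation on $L\mathbb Z^2$. By Liggett--Schonmann--Stacey it dominates a Bernoulli percolation with parameter close to $1$. In that regime, a crossing of $\lambda\mathcal R$ by good boxes fails with probability at most $Ce^{-c\lambda/L}$ by a standard Peierls argument. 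Good boxes glue together into a crossing of $\{\alpha_{L/10}\le\ell''\}$.

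\textbf{Step 4: the main obstacle.} Step 3 works with a fixed truncation $L/10$, whereas Step 2 needs $r$ growing with $\lambda$. I expect this mismatch, i.e. the long-range correlations, to be the main difficulty. I would try first to avoid it by making the comparison with a fixed $r$ and absorbing the error into the ``good'' event. Concretely, a box is declared good only if, in addition, $\|f-f_r\|_{\mathcal C^1}\le\delta$ and $\sup|f|\le \delta r$ on its neighbourhood. The problem is that these conditions are not finite-range. To handle this, I would run a multiscale scheme with levels $\ell_k\uparrow\ell$ and truncation radii $r_k=L^{k}$. At scale $k$ the error coming from $f-f_{r_k}$ is of order $r_k^{1-\beta}\sqrt{\log r_k}$; this is summable, and the condition $\beta>5/2$ should give enough room. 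Scale-$k$ badness then needs several well-separated bad scale-$(k-1)$ boxes, or a large deviation of the tail field, and the latter is bounded by Borell--TIS. This yields probabilities $\exp(-c\,2^k)$ at scale $L^k$, which is stretched-exponential in the first instance. It is upgraded to a genuinely exponential bound in $\lambda$ by a final Peierls step at the top scale, using only the finitely many truncations needed there.
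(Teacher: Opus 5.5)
\textbf{The gap is in the last sentence of Step 4.} That sentence is exactly where the obstacle you identified would have to be overcome, and it does not overcome it.

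Your multiscale scheme uses truncations $r_k=L^k$ and declares a scale-$k$ box bad only if two separated sub-boxes among its $L^2$ are bad. This gives $p_k\le L^4p_{k-1}^2+q_k$, hence $p_k\le e^{-c2^k}=e^{-cs^{\gamma}}$ at scale $s=L^k$, with $\gamma=\log 2/\log L<1$: each step multiplies the scale by $L$ but only squares the probability. A final Peierls step cannot turn this into $e^{-c\lambda}$, whichever top scale you mean.

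\emph{Fixed top scale $K$.} The scale-$K$ good events are finite-range, so Peierls does give $1-Ce^{-c\lambda}$, but only for a crossing of $\{\alpha_{r_K}\le \ell_K\}$. To pass to $\{\alpha\le\ell\}$ you still need $\alpha-\alpha_{r_K}\le\ell-\ell_K$ along that crossing. With $r_K$ fixed, nothing controls this on a region of size $\lambda$: for instance $\partial_1(f-f_{r_K})$ is a non-degenerate stationary field whose supremum over $\lambda\mathcal{R}$ grows like $\sqrt{\log\lambda}$. This is the obstruction of Step 3, merely moved to scale $L^K$. Building the tail into the good event destroys finite-range dependence. A one-box Borell--TIS bound also does not give the joint bound $\varepsilon^n$ for $n$ boxes along a contour, which is what Peierls needs.

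\emph{Top scale $s=L^K$ growing with $\lambda$.} This is forced if a union bound on the truncation error is to be $O(e^{-c\lambda})$. But Peierls on mesh $s$ then gives at best $\exp(-c(\lambda/s)\log(1/p_s))=\exp(-c\lambda s^{\gamma-1})$, which is still sub-exponential. A Peierls step produces rate $\lambda$ only from an input that is already exponential at scale $s$, so this is circular.

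\textbf{What is missing} is a renormalization step whose loss in the exponential rate is summable over scales. The scale-$(k+1)$ bad event must be bounded by a product of independent scale-$k$ events whose lengths add up to the full length minus $o(L_k)$. For example, an arm to distance $2L$ forces an arm to distance $L-O(r)$ and an independent crossing of $B_{2L}\setminus B_L$, giving roughly $\theta_{2L}\lesssim L\,\theta_{L-O(r)}\theta_L+\mathrm{err}_L$.

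Such a step forces the separation, and hence the truncation radius $r_k$, to be $o(L_k)$. The comparison cost is then $\mathrm{err}_k\approx L_k^2e^{-c\,r_k^{\,b}\delta_k^2}$, with summable sprinklings $\delta_k$. This is $\ll e^{-cL_k}$ only if $b>1$. That is presumably the role of the standing assumption $\beta>5/2$: in Proposition \ref{prop:alpha_R_good_approximation} one may take $b<\min(2,\frac{2\beta-2}{3})$, which exceeds $1$ exactly when $\beta>5/2$. Your remark that $\beta>5/2$ leaves room for summing $r_k^{1-\beta}$ misplaces it, since that needs only $\beta>1$.

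Your Step 2, truncating at $r=\lambda^{\epsilon}$ with $\epsilon<1$, is the right kind of input. It has to be fed into such a rate-preserving recursion rather than into the $L^2$-sub-box scheme followed by Peierls. Alternatively, a fixed-mesh Peierls argument would need the joint contour bound on the tail described above.

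For the record, the paper gives no proof of this proposition; it cites it as a consequence of the methods of \cite{Shadow2}. Its own multiscale argument in Proposition \ref{prop:global_struct_high_prob}, which like yours lets the number of sub-boxes per step grow, reaches only $1-\frac{1}{c}2^{-c2^{\sqrt{\log\|z\|}}}$.
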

By classical arguments (see for instance \cite{RV20}, \cite{MV20}, \cite{MRVK23}, \cite{Shadow2}), once one have proven that \eqref{eq:strongly_percolates_2} holds, it follows that if $\ell>\ell_c$ then almost surely there exists a unique unbounded connected component in the set $\{\alpha\leq \ell\}.$ The next natural question is to understand the geometry of this unbounded component. A natural way to access geometric information is through the chemical distance which is defined as the length of the shortest path joining two points and staying in the set $\{\alpha\leq \ell\}.$ We make this precise in the following definition.
\begin{definition}
    \label{def:chem}
    Let $E\subset \R^2$ be a subset.
    \begin{itemize}
        \item For $z,z'\in E$ we denote by $d_{\chem}^E(z,z')\in
    \mathbb{R}_+\cup\{+\infty\}$ the chemical distance between $z$ and $z'$ in $E$. It is defined as
    $$d_\chem(z,z'):= 
    \inf\{\text{length}(\gamma)\ |\ \gamma \in 
    \Gamma^E(z,z')\},$$
    where $\Gamma^E(z,z')$ denotes the set of all continuous and rectifiable paths $\gamma : [0,1]\to E$ such that $\gamma(0)=z$ and $\gamma(1)=z'$, and where $\text{length}(\gamma)$ denotes the Euclidean length of the rectifiable path $\gamma.$
    \item When $C\subset E$ is a connected subset of $E$ we denote by $\text{diam}_\chem^E(C)$ the chemical diameter of $C$ which is defined as
    $$\text{diam}_\chem^E(C) := \sup_{z,z'\in C}d_\chem^E(z,z').$$
    \item When $D\subset E$ is a finite union of disjoint connected components $D=C_1\sqcup C_2 \cdots \sqcup C_n$, we denote by $S^E_\chem(C)$ the sum of all chemical diameter of the connected components of $D$. That is,
    $$S^E(D) := \sum_{i=1}^n \text{diam}_\chem^E(C).$$
    \end{itemize}
\end{definition}
In this paper we prove the following result.

\begin{theorem}
\label{thm:principal}
Suppose that Assumption \ref{a:a1} is verified.
There exists a set $\mathcal{L}\subset \R$ of full Lebesgue measure (that is, the complementary $\R\setminus \mathcal{L}$ has zero Lebesgue measure) such that the following holds. 
For any $\ell\in \mathcal{L}$ such that $\ell>\ell_c$, there exists a constant $C>0$ depending on $\ell$ such that for any $\varepsilon>0$ then as $\norm{z}{}$ goes to $\infty$ we have
\begin{equation}
 \Proba{0 \overset{\{\alpha\leq \ell\}}{\connects} z \text{ and } d^{\{\alpha\leq \ell\}}_\chem(0,z)\geq C\norm{z}{}}=O\left(\norm{z}{}^{-1+\varepsilon}\right)
\end{equation}
\end{theorem}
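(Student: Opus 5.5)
We follow the Antal--Pisztora strategy, adapted to the continuous, correlated setting through a discretization and a renormalization at a mesoscopic scale. Fix $\ell>\ell_c$ and choose $\ell_c<\ell'<\ell$. The extra room $\ell-\ell'$ serves two purposes: it absorbs the errors from discretizing $\alpha$, and it allows a sprinkling argument. This is where the full-measure set $\mathcal{L}$ comes in. We take $\mathcal{L}$ to be the set of levels at which the map $\ell\mapsto\{\alpha\leq\ell\}$ is almost surely ``regular'', meaning that
\[
\{\alpha<\ell\}\quad\text{and}\quad\{\alpha\leq \ell\}
\]
have the same closure-connectivity properties, and that $\alpha$ has no critical-type degeneracy at level $\ell$ on a set of positive probability. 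Since $\ell\mapsto \Proba{\alpha(0)=\ell \text{ degenerately}}$ can be nonzero only for countably many $\ell$, or by a Fubini argument on the regularity of the law of $\alpha$, such $\mathcal{L}$ has full measure.

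\textbf{Step 1: good boxes.} Tile $\R^2$ by boxes of side $R$ (large, fixed). Call a box $B$ \emph{good} if three things hold. First, $\{\alpha\leq \ell'\}$ crosses the $3\times$ enlarged box $3B$ in both directions, with a unique crossing cluster. Second, every path of $\{\alpha\leq \ell\}\cap 3B$ of diameter at least $R/10$ is connected inside $\{\alpha\leq\ell\}\cap 3B$ to this cluster. Third, the chemical diameter $\text{diam}_\chem^{\{\alpha\leq \ell\}}$ of this cluster inside $3B$ is at most $C_0R$.

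The crossing part has probability $1-Ce^{-cR}$ by Proposition~\ref{prop:high_prob_cross} together with standard gluing. The chemical-length part uses that $f$ is $\mathcal{C}^2$, so that level-set-type boundaries of $\{\alpha\leq\ell\}$ have, for typical $\ell$, finite length with good moments (a Kac--Rice-type bound along $\ell$ integrated over levels). This is where the Lebesgue-a.e.\ restriction enters again: averaging over $\ell$ controls the length of the boundary pieces needed to route paths. The uniqueness/connection part uses sprinkling from $\ell'$ to $\ell$.

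\textbf{Step 2: decorrelation.} Because $q$ decays like $\norm{z}{}^{-\beta}$, we truncate $q$ at scale $R^{\delta}\ll R$. Here we must check that the non-local sup defining $\alpha$ is essentially determined by a window of width $O(R)$ in direction $e_1$, which holds with high probability since the slope $(f(z+te_1)-f(z))/t\to 0$ for large $t$. We then obtain a field $\tilde\alpha$ with finite-range dependence and $\norm{\alpha-\tilde\alpha}{\infty}$ small on $3B$ with probability $1-O(R^{-\kappa})$. The absorbed error is covered by the gap $\ell-\ell'$. Good boxes for $\tilde\alpha$ then form a finitely dependent site process. Note, however, that the truncation error probability is only polynomial in $R$, through the tail $\beta>5/2$ and a union bound.

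\textbf{Step 3: renormalization and conclusion.} With $R=R(\norm{z}{})$ growing slowly, a Peierls/Liggett--Schonmann--Stacey domination gives the following: outside an event of probability $O(\norm{z}{}^{-1+\varepsilon})$, the bad boxes along the segment $[0,z]$ form $*$-clusters whose total size is at most $\varepsilon\norm{z}{}/R$, and $0$ and $z$ are each either in small holes or adjacent to good regions. This error probability comes from the polynomial decorrelation, which is why only $\norm{z}{}^{-1+\varepsilon}$ is obtained rather than an exponential bound. If $0\connects z$, we build a path as follows: follow a $*$-connected chain of good boxes around each bad cluster, where the crossing clusters of neighbouring good boxes are glued by uniqueness, giving length $O(R)$ per box. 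The near-endpoint pieces are joined to the good chain using the connection to $z$ and property 2. Finally, the endpoint regions of bad boxes are controlled by bounding their chemical diameter via the $S^E$ functional of Definition~\ref{def:chem}, using the moment bound of Step 1. The total length is then at most $C\norm{z}{}$.

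\textbf{Main obstacle.} The hardest step is the chemical control in Step 1. $\alpha$ is merely continuous, so $\{\alpha\leq\ell\}$ may have fractal-like boundaries, and paths need not be rectifiable in general. We would first try to show that, for a.e.\ $\ell$, the boundary of $\{\alpha\leq\ell\}$ within a box is a finite union of curves made of pieces of $\{f=\text{const}\}$-type level lines and shadow lines (segments in direction $e_1$ plus tangency curves). We would then bound their expected total length by a coarea/Kac--Rice estimate integrated in $\ell$, which is exactly why the theorem holds only off a Lebesgue-null set of levels.
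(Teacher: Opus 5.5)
Your overall architecture matches the paper's: a linear-length macroscopic path obtained by renormalization, a coarea/first-moment control of $\sigma_\ell$ near the endpoints, and the a.e.\ restriction on $\ell$ coming from integrating the coarea formula over levels. The gap is that your renormalization uses a single scale $R$, and it fails in both regimes.

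\textbf{Fixed $R$.} Good boxes defined through $\alpha$ are not finitely dependent: $q$ only decays like $\|z\|^{-\beta}$, and $\alpha$ depends on $f$ along a whole half-line. Passing to a truncated $\tilde\alpha$ makes the good-box process finitely dependent. But you then need $\|\alpha-\tilde\alpha\|_\infty\le \ell-\ell'$ on every one of the $\gtrsim\|z\|/R$ boxes you use. At fixed $R$ this fails with a fixed probability $p(R)>0$ per box. It cannot be union-bounded as $\|z\|\to\infty$. Nor can it be fed into a Peierls/LSS count, since these failure events are themselves long-range correlated.

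\textbf{Growing $R$.} If $R=R(\|z\|)\to\infty$, as in your Step 3, your third good-box property needs $\mathrm{diam}_{\mathrm{chem}}\le C_0R$ with $C_0$ independent of $R$ and probability close to $1$. The only available tool, $S\le cR+\sigma_\ell(\alpha,3B)$ with $\mathbb{E}[\sigma_\ell(\alpha,3B)]$ of order $R^2$, gives a bound of order $R^2$. A uniform linear bound at scale $R$ is essentially the theorem itself, so the argument is circular, and your final $C$ would depend on $\|z\|$.

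\textbf{Missing idea.} What is needed is the multi-scale scheme of \cite{ARS14} used in the paper:
\begin{itemize}
\item Seed events live at a \emph{fixed} scale $\lambda_0$: a separating loop of length at most $C_0(\lambda_0)$ in $\{\alpha_{\lambda_0}\le\ell'\}$. Finite length comes from the piecewise-$\mathcal{C}^1$ structure of level sets.
\item At every scale $n$ these are combined with the events $|\alpha_{\lambda_n}-\alpha_{\lambda_{n-1}}|\le 2^{-n}\min(\ell''-\ell',1)$. Their failure probability is doubly exponentially small by the stretched-exponential bound of Proposition~\ref{prop:alpha_R_good_approximation}, and the level shifts are summable.
\item This gives $\mathbb{P}(\mathcal{A}^{(n)}_u)\ge 1-2^{-2^n}$. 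Lemma 5.3 of \cite{ARS14} then yields a chain of $m\le C_1\|z\|$ seed boxes, i.e.\ a path of length at most $C_0C_1\|z\|$ joining the $\|z\|^\varepsilon$-neighbourhoods of $0$ and $z$, outside an event of probability $2^{-c2^{\sqrt{\log\|z\|}}}$.
\end{itemize}

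\textbf{Source of the rate.} You misplace where $\|z\|^{-1+\varepsilon}$ comes from. It is not decorrelation: $\alpha\approx\alpha_R$ holds up to a stretched-exponential error, and the global path costs only a superpolynomially small failure probability. The rate comes from the endpoints, where only a \emph{first} moment of $\sigma_\ell$ is available; higher moments are not known, so your ``good moments'' is too optimistic. The paper proceeds as follows.
\begin{itemize}
\item It takes boxes $B^i_z$ of side $\asymp\|z\|^\varepsilon$ around $0$ and $z$.
\item It uses separating loops in $B^i_z(2)\setminus B^i_z(1)$, which exist with probability $1-e^{-c\|z\|^\varepsilon}$, to splice the given connection from $0$ to $z$ onto the global path.
\item It bounds the local detour by $S\le c\|z\|^\varepsilon+\sigma_\ell(\alpha,B^i_z(2))$. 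Its expectation is $O(\|z\|^{2\varepsilon})$ by Proposition~\ref{prop:kac_rice_alpha}, and Markov at threshold $\|z\|$ gives $O(\|z\|^{-1+2\varepsilon})$.
\end{itemize}
Without such an explicit endpoint window, your ``small holes around $0$ and $z$'' have uncontrolled random size and uncontrolled chemical cost.

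\textbf{The set $\mathcal{L}$.} It should simply be the set of levels with $\mathbb{E}[\sigma_\ell(\alpha,B)]<\infty$. This follows from the coarea formula for the locally Lipschitz $\alpha$ together with $\mathbb{E}[\|\nabla\alpha(0)\|\mathds{1}_{\alpha(0)>u}]<\infty$. Your first-paragraph description via atoms or closure-connectivity is not the relevant property.
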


\paragraph{Comments on Theorem \ref{thm:principal}} We observe that Theorem \ref{thm:principal} is similar in spirit to the result of Antal and Pisztora \cite{AP96}. That is, when $\ell$ is supercritical then with high probability the chemical probability behaves like the Euclidean distance up to some multiplicative constant. One main difference is that our result does not permit us to say that this hold for all $\ell>\ell_c$ but only for almost all $\ell>\ell_c$. This is mainly due to a new technicality that arises in the continuous setting. In fact, while in a discrete case there is a deterministic upper bound for the length of a path that lives in a box (depending on the size of the box), there is no such deterministic upper bound in the continuous setting where the set $\{\alpha\leq \ell\}$ could contort a lot at arbitrary small scales generating an unexpected large chemical distance. There are different ways to deal with this difficulty. One way is to apply a quantitative implicit function theorem to show that in small squares of size $\varepsilon$ (which can be quantified) then the set $\{\alpha=\ell\}$ looks like the graph of a function. This approach was the one used in \cite{Vernotte2}. However, this approach appears difficult to implement in our setting. In fact, our field $\alpha$ is not even of class $\mathcal{C}^1$. Although there exist some variants of an implicit function theorem for weaker classes of functions (such as the class of locally Lipschitz functions), it is hard to implement and to make such results quantitative. Another approach, which works in dimension two only, relies on a Kac-Rice formula to control the length of $\{\alpha=\ell\}\cap B$ where $B\subset \R^2$ is a box. This allows to control the chemical distance generated in the box $B$ (that is to control $S^{\{\alpha\leq \ell\}}(B\cap \{\alpha\leq \ell\})$ with the notations of Definition \ref{def:chem}), and replaces the deterministic argument that we had in the discrete case. This strategy was the one used in \cite{Vernotte1} and as we mentioned it relies on obtaining a Kac-Rice formula for the level set of $\alpha$. In this paper we obtain such a formula, however we only manage to prove it for almost all levels $\ell\in \R$ instead of all $\ell\in \R$. This explains why our Theorem \ref{thm:principal} only holds for almost all $\ell>\ell_c$. Although we believe that such result should in fact hold for all $\ell>\ell_c$. Another difference between our theorem and the one obtained in \cite{AP96} for classical Bernoulli percolation is that instead of an exponential decay of the probability we only obtain a polynomial decay. This is again due to our strategy to handle the local behaviour around a point. A natural way to improve this bound would be to show that the length of $\{\alpha=\ell\}\cap B$ admits moment of higher order. Such a statement was proven under some additional regularity assumptions but only for Gaussian fields \cite{Gass}, \cite{Ancona}. It is unclear to us how to generalise this result to our field $\alpha$. However, we believe that Theorem \ref{thm:principal} should hold for all $\ell>\ell_c$ and with a stronger decay (super-polynomial if not stretched-exponential).

\paragraph{Strategy of the proof and organisation of the paper}

In \cite{ARS14}, the authors present a general strategy to obtain a control about the chemical distance between two points $z$ and $z'$ in a discrete setting. Essentially, by using a renormalization argument it is shown that with high probability there exists $\gamma$ an open path (the equivalent of a path included in $\{\alpha\leq \ell\}$) that starts near $z$ and ends near $z'$ and such that the length of the path $\gamma$ is bounded from above by $C\norm{z-z'}{}$ where $C>0$ is a constant independent from $z$ and $z'$. Then it is possible to connect $z$ and $z'$ to this path without paying a large cost in chemical distance. Indeed, these connections sit inside small boxes of finite size, hence we may simply use a deterministic upper bound to control the length of these connections. We follow this general strategy, however, we need an additional argument to control the length of the connections from $z$ and $z'$ to the path $\gamma$. This is done in Section \ref{sec:2} where we prove a Kac-Rice formula and use it to control the chemical distance generated in a box $B\subset \R^2$. In Section \ref{sec:2} we will highlight the main difficulties to obtain a Kac-Rice formula and show how to solve each of them. In Section \ref{sec:3} we briefly recall the renormalization argument developed in \cite{ARS14} (see also \cite{Vernotte1}) and we show how to obtain Theorem \ref{thm:principal}.

\paragraph{Acknowledgements} I am very grateful to my PhD advisor Damien Gayet who gave me a lot of advice and who also read a first version of this paper.

\section{Local control of nodal lines}
\label{sec:2}
 The objective of this section is to provide a control of the size of the set
\begin{equation}
\label{eq:zl}
    \mathcal{Z}_\ell(\alpha,B) := \alpha^{-1}(\{\ell\})\cap B=\{z\in B\ |\ \alpha(z)=\ell\},
\end{equation}
 where $B$ is a fixed box in $\R^2$, $\ell>0$ is a fixed parameter and $\alpha$ was introduced in \eqref{eq:def_alpha}. More precisely, we denote by $\sigma_\ell(\alpha,B)$ the 1-dimensional Hausdorff measure of the set $\mathcal{Z}_\ell(\alpha, B)$.
 \begin{equation}
     \sigma_\ell(\alpha,B) := \mathcal{H}^1(\mathcal{Z}_\ell(\alpha,B)),
 \end{equation}
 where $\mathcal{H}^1$ denotes the $1$-dimensional Hausdorff measure.
 It appears that $\sigma_\ell(\alpha,B)$ is a random variable which a priori takes values in $\mathbb{R}_+\cup\{+\infty\}$. The main objective of this section is to obtain a control on the tail of this random variable. In the end, this will be achieved via a Kac-Rice formula that allows us to estimate the expectation of $\sigma_\ell(\alpha,B)$ (and therefore gives information on the tail). First of all we will recall the standard Kac-Rice formula, we will then highlight the new difficulties that me must deal with in order to generalise this formula to our field $\alpha$. Each part of this section is then devoted to solving each of these difficulties.

When $g : \R^2\to \R$ denotes for instance a smooth planar Gaussian field, the Kac-Rice formula gives a close form formula for $\mathbb{E}[\sigma_\ell(g,B)]$. This formula (see for instance \cite{KAC43}, \cite{AT07}, \cite{AAJ25}) reads
\begin{equation}
    \label{eq:kac_rice_objective}
    \mathbb{E}[\sigma_\ell(g,B)]=\int_B \mathbb{E}[\norm{\nabla g(z)}{}|g(z)=\ell]\phi_{g(z)}(\ell)dz.
\end{equation}
In the above expression, $\phi_{g(z)}$ denotes the density of the random variable $g(z)$ with respect to the Lebesgue measure and $dz$ denotes the Lebesgue measure on $\R^2$. The starting point to obtain such formula is based on the coarea formula. However, we mention that in our context we need to solve a variety of problems to make \eqref{eq:kac_rice_objective} rigorous, true, and usable for $g=\alpha$.
\begin{enumerate}
    \item First, it appears that $\alpha$ is not a $\mathcal{C}^1$ function, but a locally Lipschitz function instead. Hence we need to be very careful when working with $\nabla \alpha$ which is not defined everywhere (but only almost everywhere).
    \item Second, it is not obvious how to define $\mathbb{E}[\norm{\nabla \alpha(x)}{}|\alpha(x)=u]$ and work with this quantity. In fact, even if one could make sense of $\nabla \alpha(x)$, it is a priori not clear if the random vector $(\alpha(x), \nabla \alpha(x))$ has a density with respect to the Lebesgue measure on $\R^3$. Using arguments from the Malliavin calculus we will show that the random variable $\alpha(0)$ has a density with respect to the Lebesgue measure on $\R$. This will be enough for our purpose but it is still an open question to know if $\nabla \alpha(0)$ itself has a density with respect with the Lebesgue measure (and even harder, if $(\alpha(0),\nabla \alpha(0))$ has a joint density with respect to the Lebesgue measure on $\R^3$).
    \item Once we have obtain some $\ell>0$ for which \eqref{eq:kac_rice_objective} holds for $g=\alpha$, we also need to show that the two sides which are equal in \eqref{eq:kac_rice_objective} are actually finite. This is non trivial since the joint law of $\alpha(0)$ and $\nabla \alpha(0)$ could be very complicated.
\end{enumerate}
The main result of this section is the following.
\begin{proposition}
\label{prop:kac_rice_alpha}
There exists a set $\mathcal{L}\subset \R$ of full measure (that is $\mathbb{R}\setminus\mathcal{L}$ has zero Lebesgue measure) such that the following holds.
For any $\ell\in \mathcal{L}$ and for any ball $B\subset \R^2$ of the form $B=z_0+[-R_0,R_0]^2$ for some $R_0>0$ and $z_0\in \R^2$, then we have
$$\mathbb{E}[\sigma_\ell(\alpha,B)]<+\infty.$$
\end{proposition}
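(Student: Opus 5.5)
The plan is to use the coarea formula for the locally Lipschitz function $\alpha$, together with Fubini in the level variable. This replaces a pointwise Kac--Rice formula, and it is the reason the conclusion holds only for almost every $\ell$.

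\textbf{Step 1: $\alpha$ is locally Lipschitz.} Almost surely $f$ is $\mathcal{C}^2$, and each map $z\mapsto\tau_f(t,z)$ is Lipschitz on a compact set $K$. Its Lipschitz constant is controlled by $\sup_{K'}\norm{\nabla f}{}$ when $t\le 1$, via the mean value representation $\tau_f(t,z)=\int_0^1\partial_1 f(z+ste_1)\,ds$. For $t\geq 1$ the constant is bounded by $2\sup_{K+[0,\infty)e_1}\norm{\nabla f}{}/t$, which is not uniform in $t$. To handle this, I would first show that the supremum defining $\alpha$ is attained on a random bounded range of $t$, uniformly for $z\in B$. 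This uses the almost sure sublinear growth $|f(z+te_1)|=o(t)$, which follows from stationarity, Gaussian tails and Borel--Cantelli on unit squares. Since $\alpha\ge \partial_1 f$, we have $\alpha(z)\geq \inf_B\partial_1 f>-\infty$, and once $t$ is large the quotients fall below this bound. Hence on $B$ we get $\alpha=\sup_{t\in[0,T]}\tau_f(t,\cdot)$ with $T$ random but finite, and $\alpha$ is Lipschitz on $B$ with constant
$$L_B\le \sup_{t\in[0,T],\,z\in B}\norm{\nabla_z\tau_f(t,z)}{}\le \sup_{B+[0,T]e_1}\norm{\nabla f}{}\cdot(1+\text{const}).$$
I will instead bound $\norm{\nabla\alpha}{}$ almost everywhere by a quantity $M_B$ with $\mathbb{E}[M_B]<\infty$. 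For a.e. $z$, Danskin's envelope argument gives $\nabla\alpha(z)=\nabla_z\tau_f(t^*,z)$ at a maximizer $t^*$. Then $\norm{\nabla_z\tau_f(t,z)}{}\le \sup_{s\ge0}\norm{\nabla f(z+se_1)}{}$ for $t\le1$, and $\le 2\sup_{s\geq 0}\norm{\nabla f(z+se_1)}{}/t$ for $t\ge1$.

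This gradient control is the main obstacle, because $\sup_{s\ge0}\norm{\nabla f(z+se_1)}{}$ is infinite. I would first try to exploit the maximizer to get a weight: at $t^*\ge1$, the condition $\tau_f(t^*,z)\ge\partial_1 f(z)$ combined with optimality of $t^*$ gives $\partial_1 f(z+t^*e_1)=\tau_f(t^*,z)$. Consequently
$$\norm{\nabla\alpha(z)}{}\le \frac{\norm{\nabla f(z+t^*e_1)}{}+\norm{\nabla f(z)}{}}{t^*}+\ldots\le \sup_{s\ge0}\frac{\norm{\nabla f(z+se_1)}{}}{1+s}\cdot C .$$
The weighted supremum $\sup_{s}\norm{\nabla f(z+se_1)}{}/(1+s)$ has finite expectation, by a union bound over unit intervals using Gaussian tails (Borell--TIS) and stationarity. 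I define $M_B$ as the supremum of this quantity over $z\in B$, which is still integrable for the same reason.

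\textbf{Step 2: coarea.} By the coarea formula for Lipschitz maps (Federer), almost surely
$$\int_{\R}\sigma_\ell(\alpha,B)\,\psi(\ell)\,d\ell=\int_B\norm{\nabla\alpha(z)}{}\psi(\alpha(z))\,dz$$
for any weight $\psi\ge 0$. I take $\psi\equiv1$ on $[-n,n]$. Taking expectations and applying Tonelli gives
$$\int_{-n}^{n}\mathbb{E}[\sigma_\ell(\alpha,B)]\,d\ell\le \int_B\mathbb{E}[\norm{\nabla\alpha(z)}{}]\,dz\le |B|\,\mathbb{E}[M_B]<\infty .$$
Hence $\mathbb{E}[\sigma_\ell(\alpha,B)]<\infty$ for a.e. $\ell$, and the exceptional null set depends on $B$. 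Measurability of $\ell\mapsto\mathcal{H}^1(\mathcal{Z}_\ell)$ jointly with $\omega$ is part of Federer's theorem.

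\textbf{Step 3: uniformity in $B$.} For the box $B_N:=[-N,N]^2$ with $N\in\N$, let $\mathcal{L}_N$ be the full-measure set obtained in Step 2, and set $\mathcal{L}=\bigcap_N\mathcal{L}_N$, which still has full measure. Any box $z_0+[-R_0,R_0]^2$ lies inside some $B_N$, and $\sigma_\ell$ is monotone in $B$, so the bound transfers to every such box. Stationarity is not even needed here, though it would give uniform constants. Note that the density of $\alpha(0)$, obtained via Malliavin calculus, is not needed for finiteness; it only enters when identifying the Kac--Rice integrand in \eqref{eq:kac_rice_objective}.
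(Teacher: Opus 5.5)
Your strategy is sound and takes a different route from the paper's, but two steps fail as written.

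\textbf{First slip (small $t$).} For $t\le 1$ you bound $\norm{\nabla_z\tau_f(t,z)}{}$ by $\sup_{s}\norm{\nabla f(z+se_1)}{}$. This is false: $\nabla_z\tau_f(t,z)=\int_0^1\nabla\partial_1 f(z+ste_1)\,ds$, which at $t=0$ equals $\nabla\partial_1 f(z)$, a second derivative. Small $t$ must be controlled by $\sup_{B+[0,1]e_1}\norm{\nabla^2 f}{}$, which is integrable since $f$ is $\mathcal{C}^2$ and Gaussian (Borell--TIS). This term has to be added both to $M_B$ and to your bound on $L_B$.

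\textbf{Second slip (bounded range).} The bounded-range step does not work. The bound $\alpha\ge\inf_B\partial_1 f$ is useless when this infimum is $\le 0$, because the quotients $\tau_f(t,z)$ only tend to $0$ as $t\to\infty$. You would need $\inf_B\alpha>0$, which is exactly what the paper imports from Lemma 2.6 of \cite{Shadow2}.

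\textbf{Repair.} You can drop both the bounded-range step and Danskin. Set $W_B:=\sup_{z\in B}\sup_{s\ge 0}\norm{\nabla f(z+se_1)}{}/(1+s)$. Each $\tau_f(t,\cdot)$ is Lipschitz on $B$ with constant at most $\sup_{B+[0,1]e_1}\norm{\nabla^2 f}{}$ for $t\le 1$, and at most $3W_B$ for $t\ge 1$. Hence the finite supremum $\alpha=\sup_t\tau_f(t,\cdot)$ is Lipschitz on $B$ with $L_B\le\sup_{B+[0,1]e_1}\norm{\nabla^2 f}{}+3W_B$. Moreover $\norm{\nabla\alpha}{}\le L_B$ at every point of differentiability. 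Your union bound over unit squares gives $\mathbb{E}[W_B]<\infty$. So this single integrable bound supplies both the Lipschitz hypothesis for the coarea formula and the estimate in your Step 2.

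\textbf{Comparison with the paper.} The paper gets Lipschitz continuity from the uniform bounded range of maximizers. It then combines coarea, stationarity, and a disintegration of the law of $(\alpha(0),\nabla\alpha(0))$ over the Malliavin density $\phi_0$ to obtain an actual Kac--Rice identity for a.e.\ $u$. Finiteness comes from Lemma \ref{lemma:expectation_finite}, which needs the cutoff $\alpha(0)>u>0$ to rule out a distant maximizer $T$. As a result it only yields $\int_\ell^\infty\mathbb{E}[\sigma_u(\alpha,B)]\,du<\infty$ for each $\ell>0$.

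Your $1/(1+t)$ weighting handles distant maximizers directly, through the slow growth of $\sup\norm{\nabla f}{}$. You therefore get $\mathbb{E}\norm{\nabla\alpha(0)}{}<\infty$ with no cutoff, hence $\int_\R\mathbb{E}[\sigma_\ell(\alpha,B)]\,d\ell<\infty$, and you never need the density. You are right that the density only serves to identify the integrand. The paper's route buys the explicit formula; for the proposition as stated, yours is shorter and more self-contained. Your Step 3 (nested boxes plus monotonicity) is as valid as the paper's covering-plus-stationarity argument.
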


\subsection{Preliminaries results}
First, it will appear that the field $\alpha$ is not globally $\mathcal{C}^1$. However we show that if $z\in \R^2$ is fixed, then almost surely $\alpha$ is differentiable at $z$. By stationarity it is enough to prove the result at $z=0$. Hence we aim to prove the following result.
\begin{proposition}
\label{prop:diff_at_given_point}
Almost surely, there exists a unique random $T\in [0,+\infty[$ such that $\alpha(0)=\tau_f(T,0)$ and $\alpha$ is differentiable at $0$ with
\begin{equation}
    \nabla \alpha(0) = \tau_{\nabla f}(T,0),
\end{equation}
where we recall that $\tau_{\nabla f}$ was defined by \eqref{eq:def_tau}.
\end{proposition}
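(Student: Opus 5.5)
The plan is to view $\alpha(z)=\sup_{t\ge 0}\tau_f(t,z)$ as a supremum of a $\mathcal{C}^1$ family of functions of $z$, indexed by $t\in[0,\infty[$. We then use a Danskin-type envelope argument: if the supremum is attained at a unique $T$ and the family is jointly continuous with derivatives in $z$ that are jointly continuous, then $\alpha$ is differentiable at $0$ with gradient $\nabla_z\tau_f(T,0)=\tau_{\nabla f}(T,0)$. Since $f$ is $\mathcal{C}^2$, the map $(t,z)\mapsto \tau_f(t,z)$ is continuous on $[0,\infty[\times\R^2$ (at $t=0$ because $\tau_f(t,z)=\int_0^1\partial_1 f(z+ste_1)ds$), and $\nabla_z\tau_f(t,z)=\tau_{\nabla f}(t,z)=\int_0^1\nabla\partial_1 f(z+ste_1)ds$ is also jointly continuous. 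The proof therefore reduces to three points: existence of a maximiser, uniqueness of the maximiser, and the envelope argument.

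\emph{Existence (compactness at infinity).} We show that almost surely $\tau_f(t,z)\to 0$ as $t\to\infty$, uniformly for $z$ in a neighbourhood of $0$. This holds because $f$ grows sublinearly along the horizontal line: Assumption \ref{a:a1} gives decay of correlations, so a Borel--Cantelli argument over unit boxes combined with Gaussian tail bounds on $\sup_{\text{box}}|f|$ (for instance through Borell--TIS and the $\mathcal{C}^1$ regularity) gives $\sup_{\norm{w}{}\le r}|f(w)|=O(\sqrt{\log r})$ almost surely. Next we check that $\alpha(0)>0$ almost surely. Indeed $\alpha(0)\ge 0$, and $\alpha(0)\le 0$ would force $f(te_1)\le f(0)$ for all $t$; this has probability zero, for example because $\limsup_t f(te_1)=+\infty$ almost surely by ergodicity along the line, the mixing coming from the correlation decay. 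Hence $\sup_t \tau_f(t,z)$ is attained on a compact interval $[0,M]$ for all $z$ near $0$, with $M$ random but finite, and a maximiser $T\in[0,M]$ of $t\mapsto \tau_f(t,0)$ exists.

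\emph{Uniqueness (main obstacle).} We must show that almost surely $t\mapsto\tau_f(t,0)$ has a unique global maximiser on $[0,\infty[$. The standard tool is the Bulinskaya / Tsirelson-type lemma: for a continuous Gaussian process whose suprema over disjoint intervals are non-degenerate, the maxima over two disjoint compact intervals coincide with probability zero. Concretely, fix rational $0\le a<b<c<d$ and set $X_1=\sup_{[a,b]}\tau_f(\cdot,0)$ and $X_2=\sup_{[c,d]}\tau_f(\cdot,0)$. We show $\Proba{X_1=X_2}=0$ by writing $\tau_f(t,0)=\tau_f(t,0)-\lambda(t)\xi+\lambda(t)\xi$ with $\xi=f(0)$ or $\xi=\partial_1 f(0)$. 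For $t>0$ one has $\tau_f(t,0)=(f(te_1)-f(0))/t$, so conditioning on everything orthogonal to $\xi=f(0)$ shifts the process by $\xi\,(k(t)-1)/t$ times a constant, and this shift depends genuinely on $t$. Choosing $\xi$ so that the shift coefficient differs across the two intervals (for instance $t\mapsto -1/t$ from the $-f(0)/t$ term, once the remaining part is made independent of $\xi$), the event $X_1=X_2$ becomes an equation in $\xi$ with at most one solution given the other variables, because the difference of suprema is strictly monotone in $\xi$. Since $\xi$ has a density, this event has probability $0$. The delicate points are the case $t=0$ (the derivative endpoint) and the exact non-degeneracy of the conditioning. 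The natural first attempt is to condition on $f(0)$ and use the explicit $-f(0)/t$ dependence, which is strictly monotone in $t$. If that fails we would instead use a Cameron--Martin shift along a function $h=q\ast\varphi$ with $\varphi$ supported appropriately. Taking a countable union over rational intervals gives uniqueness of $T$, including the comparison of a maximiser at $t=0$ with one at $t>0$.

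\emph{Differentiability (Danskin).} With $T$ unique, the lower bound $\alpha(z)-\alpha(0)\ge \tau_f(T,z)-\tau_f(T,0)=\scal{\tau_{\nabla f}(T,0)}{z}+o(\norm{z}{})$ is immediate. For the upper bound, let $T_z$ be a maximiser for $z$; such maximisers lie in $[0,M]$, and by compactness and uniqueness $T_z\to T$ as $z\to0$. Then $\alpha(z)-\alpha(0)\le \tau_f(T_z,z)-\tau_f(T_z,0)=\scal{\tau_{\nabla f}(T_z,\theta z)}{z}$ for some $\theta\in]0,1[$ by the mean value theorem, and this equals $\scal{\tau_{\nabla f}(T,0)}{z}+o(\norm{z}{})$ by joint continuity of $\tau_{\nabla f}$. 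Combining the two bounds gives $\nabla\alpha(0)=\tau_{\nabla f}(T,0)$.
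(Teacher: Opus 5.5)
Your existence step and the Danskin envelope argument are sound, and the latter is a genuinely different, lighter route than the paper's. Once the maximiser $T$ is known to be unique, your two-sided bound gives $\nabla\alpha(0)=\tau_{\nabla f}(T,0)$ using only $f\in\mathcal{C}^2$. It avoids the implicit function theorem of Lemma \ref{lemma:max_tracking_technical} and its extra hypothesis $\partial_t^2\tau_f(T_0,0)\neq 0$. That hypothesis buys the paper more: a $\mathcal{C}^1$ maximiser, so $\alpha$ is $\mathcal{C}^1$ near $0$.

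\textbf{The gap is in the uniqueness step, which as written does not work.} Regressing on $\xi=f(0)$ gives $\tau_f(t,0)=W(t)+\lambda(t)\xi$, with $W$ independent of $\xi$, $\lambda(t)=(k(t)-1)/t$ and $\lambda(0)=0$, where $k(t)=K(te_1,0)/K(0,0)$. Your monotonicity argument needs the ranges of $\lambda$ on the two intervals to be separated, i.e.\ $\inf_{[a,b]}\lambda>\sup_{[c,d]}\lambda$ or the reverse. But $\lambda(0)=0$, $\lambda<0$ on $]0,\infty[$, and $\lambda(t)\to 0$ as $t\to\infty$, so $\lambda$ takes every value in $]\min\lambda,0[$ at least twice. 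Around two times $s_0<t_0$ with $\lambda(s_0)=\lambda(t_0)$, the difference of suprema need not be monotone in $\xi$, and you get no control on ties. The same problem arises for $\xi=\partial_1 f(0)$: its coefficient equals $1$ at $t=0$, tends to $0$ at infinity, and has no reason to be monotone. The ``$-1/t$'' variant is not available at all. A component of $f(0)$ independent of every $f(te_1)$, $t>0$, would have to be orthogonal to their closed span, yet $f(te_1)\to f(0)$ in $L^2$ as $t\to 0$, so $f(0)$ lies in that span. Finally, $q$ is not compactly supported, so a Cameron--Martin direction $h=q\ast\varphi$ cannot be ``supported'' to act on one interval only; your fallback does not by itself give the separation.

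\textbf{The missing idea is localisation}, as in the Kim--Pollard argument for uniqueness of the argmax of a continuous Gaussian process. For each pair $s_0\neq t_0$ in $[0,\infty[$, take $\xi=\tau_f(s_0,0)-\tau_f(t_0,0)$. It has positive variance: it is a non-trivial finite combination of values of $f$ and $\partial_1 f$ at distinct points of the line, and the spectral density $|\hat q|^2$ is non-zero on an open set. Regressing, $\tau_f(t,0)=W(t)+\lambda(t)\xi$ with $\lambda$ continuous and $\lambda(s_0)-\lambda(t_0)=1$. Hence there are rational intervals $U\ni s_0$ and $V\ni t_0$ with $\inf_U\lambda>\sup_V\lambda$. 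On such a pair your monotonicity argument goes through and gives $\mathbb{P}(\sup_U\tau_f(\cdot,0)=\sup_V\tau_f(\cdot,0))=0$, and countably many such pairs cover $\{s\neq t\}$.

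Alternatively, follow the paper. At a maximiser $T>0$ the first-order condition gives $f(Te_1)-f(0)-T\partial_1 f(Te_1)=0$ and $\alpha(0)=\partial_1 f(Te_1)$. Two maximisers $T_1<T_2$ therefore give three equations in two unknowns, which Lemma \ref{lemma:adler} excludes almost surely. When $T_1=0$, the analogous system is two equations in one unknown.
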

The proof of Proposition \ref{prop:diff_at_given_point} will be a combination of deterministic arguments from differential calculus together with some results from \cite{AT07}.
We begin with a basic lemma.
\begin{lemma}
\label{lemma:max_tracking_technical}
Let $X : \R_+\times [-1,1]^2 \to \R$ be a $\mathcal{C}^2$ function. We denote by $A : ]-1,1[^2\to \R\cup\{\infty\}$ the function which maps $h\in ]-1,1[^2$ to $A(h) := \sup_{t\geq 0}X(t,h).$
We make the following assumptions:
\begin{enumerate}
    \item there exists a unique $T_0\in [0,\infty[$ such that $A(0)=X(T_0,0)>0$;
    \item if $T_0>0$ then $\frac{\partial^2 X}{\partial t^2}(T_0,0)\neq 0$, otherwise, if $T_0=0$ then $\frac{\partial X}{\partial t}(T_0,0)\neq 0$;
    \item $\forall \varepsilon>0,\ \exists C>0,\ \forall u\in [-1,1]^2,\forall t\geq C,\ |X(t,u)|\leq \varepsilon.$
\end{enumerate}
Then there exists an open subset $V\subset [-1,1]^2$ containing $0$ and a $\mathcal{C}^1$ application $T : V\to \mathbb{R}_+$ such that for any point $h\in V$ we have $A(h)=X(T(h),h)$. Moreover, the map $h\mapsto A(h)$ is differentiable at $h=0$ and we have
$$\nabla A(0) = \nabla_h X(T_0,0).$$
\end{lemma}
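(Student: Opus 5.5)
Our plan is to localize the supremum near $T_0$, apply the implicit function theorem to track the maximizer, and then derive the gradient formula by an envelope-type argument.

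\emph{Step 1: localization.} Set $\delta := A(0)-X(T_0,0)/2 = X(T_0,0)/2>0$. By assumption 3 with $\varepsilon=\delta/2$ there is $C>T_0$ such that $|X(t,u)|\le \delta/2$ for all $t\ge C$ and all $u$. By continuity of $X$ at $(T_0,0)$, for $h$ small we have $X(T_0,h)>\delta$. Hence for such $h$ the supremum $A(h)$ is attained on the compact set $[0,C]$, and in particular $A(h)$ is finite.

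Next we show the maximizers concentrate near $T_0$. Fix $\eta>0$. The set $K_\eta=[0,C]\setminus ]T_0-\eta,T_0+\eta[$ is compact, and by uniqueness in assumption 1 we have $\max_{K_\eta}X(\cdot,0)<X(T_0,0)$. Since $X$ is uniformly continuous on $[0,C]\times[-1,1]^2$, this strict inequality persists for $h$ small. So every maximizer $t_h$ of $X(\cdot,h)$ lies in $]T_0-\eta,T_0+\eta[$ once $h$ is small.

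\emph{Step 2: tracking the maximizer.} We split into the two cases of assumption 2.

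If $T_0>0$, then $T_0$ is an interior maximum, so $\partial_tX(T_0,0)=0$, and $\partial_t^2X(T_0,0)<0$ since it is nonzero and the point is a maximum. The map $\partial_tX$ is $\mathcal{C}^1$, so the implicit function theorem gives a neighborhood $V$ of $0$, an interval $I=]T_0-\eta,T_0+\eta[$, and a $\mathcal{C}^1$ map $T:V\to I$ such that $T(h)$ is the unique zero of $\partial_tX(\cdot,h)$ in $I$. Shrinking $\eta$ and $V$ so that $\partial_t^2X<0$ on $I\times V$, the function $X(\cdot,h)$ is strictly concave on $I$. By Step 1 the global maximizer lies in $I$, is interior for $\eta<T_0$, and is therefore a critical point, so it equals $T(h)$.

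If $T_0=0$, then $\partial_tX(0,0)\neq0$ and $0$ is a maximum of $X(\cdot,0)$ on $\R_+$, which forces $\partial_tX(0,0)<0$. By continuity $\partial_tX<0$ on $[0,\eta[\times V$, so $X(\cdot,h)$ is decreasing there. Combined with Step 1, the maximizer is $t=0$, and we take $T\equiv0$, which is $\mathcal{C}^1$.

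In both cases $A(h)=X(T(h),h)$ on $V$, and $V$ can be taken open.

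\emph{Step 3: the gradient.} We compute $\nabla A(0)$ by the chain rule applied to $h\mapsto X(T(h),h)$:
$$\nabla A(0)=\partial_tX(T_0,0)\,\nabla T(0)+\nabla_hX(T_0,0).$$
In the case $T_0>0$ the first term vanishes because $\partial_tX(T_0,0)=0$. In the case $T_0=0$ it vanishes because $\nabla T\equiv0$. This gives $\nabla A(0)=\nabla_hX(T_0,0)$, and in fact shows $A$ is $\mathcal{C}^1$ on $V$.

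\emph{Main obstacle.} The delicate step is Step 1. The implicit function theorem only produces a local critical point, so we must rule out the global maximizer escaping to infinity or jumping to a distant near-maximizer. Assumption 3, together with the uniqueness of $T_0$ and a compactness argument, is exactly what prevents this.
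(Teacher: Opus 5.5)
Your proof is correct and takes the same route as the paper. You localize the maximizer using assumption 3, compactness and the uniqueness of $T_0$; you apply the implicit function theorem to $\partial_t X$ near $(T_0,0)$, or take $T\equiv 0$ when $T_0=0$ since $\partial_t X(0,0)<0$; and you conclude with the chain rule because $\partial_t X(T_0,0)\,\nabla T(0)=0$. Your Step 1, with the compact set $K_\eta$ and uniform continuity, in fact supplies the detail the paper only asserts ("by a similar argument \ldots{} we may assume"), namely that for $h$ near $0$ the global maximizer stays inside the implicit-function interval $]a,b[$.
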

\begin{proof}
We first do the proof in the case $T_0>0$ and we later explain how to adapt it in the case $T_0=0$. Consider the map
$$\Phi : (t,h)\in ]0,\infty[\times ]-1,1[^2\mapsto \frac{\partial X}{\partial t}(t,h).$$
Then, since $T_0$ maximizes $t\mapsto X(t,0)$, we see that $\Phi(T_0,0)=0.$ We also remark that under our assumptions $$\frac{\partial \Phi}{\partial t}(T_0,0)=\frac{\partial^2 X}{\partial t^2}(T_0,0)\neq 0.$$ Hence by the implicit function theorem, there exists an open set $U\subset [-1,1]^2$ containing $0$ as well as an open interval $W=]a,b[$ (with $0<a<b<\infty$) containing $T_0$, and there exists map $T : U\to ]a,b[$ with $T(0)=T_0$ and such that for any $(t,h)\in ]a,b[\times U$ we have $\Phi(t,h)=0\Leftrightarrow t=T(h).$
Now we claim that there exists $V\subset U$ an neighborhood of $0$ such that for any $h\in V$, the supremum defining $A(h)$ is attained at the unique point $T(h)$. In fact, we have $X(0,0)<X(T_0,0)$ by the uniqueness of $T_0$. By continuity there exists $V\subset U$ a neighborhood of $0\in \R^2$ small enough such that for all $h\in V$ we have $X(0,h)< X(T_0,h).$ Moreover, by taking $0<\varepsilon<\frac{X(T_0,0)}{4}$ then by our last assumption we see that there exists a constant $C$ such that when $t>C$ and $h\in V$ we also have $X(t,h)<X(T_0,h).$ By compactness, this shows that for each $h\in V$ there exist at least one point $0<S(h)<C$ such that $A(h)=X(S(h),h)$. We thus have $\Phi(S(h),h)=0$. If we want to conclude that $S(h)=T(h)$ we additionally need to show that $S(h)\in [a,b]$. This can be done by a similar argument, using the continuity of $X$ we may assume without loss of generality (up to restricting the neighborhood $]a,b[\times V$ of $(T_0,0)$) that for any $h\in V$ then the supremum of $t\mapsto X(t,h)$ is attained in $]a,b[$.

We now conclude the proof in the case $T_0>0$. We now know that in a neighborhood of $0$ we have $A(h)=X(T(h),h)$. This immediately shows that $A$ is differentiable at $0$ since $X$ and $T$ are of class $\mathcal{C}^1$. Moreover we have by the chain rule for $i\in \{1,2\}$,
$$\frac{\partial A}{\partial h_i}(0) = \frac{\partial X}{\partial t}(T_0,0)\frac{\partial T}{\partial h_i}(0)+\frac{\partial X}{\partial h_i}(T_0,0).$$
Since $\frac{\partial X}{\partial t}(T_0,0)=0$ we ultimately get $\nabla_h A(0)=\nabla_h X(T,0).$
This concludes the proof in the case $T_0>0$.

In the case where $T_0=0$, we argue that in a neighborhood $V$ of $0\in \R^2$ one may set $T(h)=0$ for all $h\in V$. Indeed, we proceed as before by observing that far from the origin the field $X$ takes small values that cannot be the maximum values. And near $0$ we use the fact that $\frac{\partial X}{\partial t}(0,0)$ is nonzero and in fact negative to get the desired result.
\end{proof}
We now recall a lemma from \cite{AT07}.
\begin{lemma}[{{\cite[Lemma 11.2.10]{AT07}}}]
\label{lemma:adler}
Let $K\subset \R^n$ be a compact. Let $\Omega\subset \R^n$ be an open subset containing $K$.
 Let $\Phi : \Omega \to \R^{n+1}$ be a random function which is almost surely of class $\mathcal{C}^1$. Assume that for any $t\in K$ then the random variable $\Phi(t)$ admits a probability density with the respect to the Lebesgue measure on $\R^{n+1}$. Assume also that all these probability densities are bounded in a neighborhood of $0\in \R^{n+1}$ (uniformly in $t\in K$). Then almost surely, there exists no point $t\in K$ such that $\Phi(t)=0.$
\end{lemma}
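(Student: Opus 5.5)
The plan is a first-moment covering argument. It uses that the codimension $n+1$ exceeds the dimension $n$ of $K$: a small cube of side $\varepsilon$ costs a probability of order $\varepsilon^{n+1}$ to contain a zero. Only about $\varepsilon^{-n}$ cubes are needed to cover $K$, so the total probability is of order $\varepsilon$.

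\emph{Setup.} Since $K$ is compact and $\Omega$ is open, there is $\delta>0$ such that the closed $\delta$-neighbourhood $K_\delta$ of $K$ is a compact subset of $\Omega$. We may also assume that the densities $p_t$ of $\Phi(t)$ satisfy $p_t(x)\leq D$ for all $t\in K_\delta$ and $\norm{x}{}\leq \rho$. Here I take the neighbourhood in the hypothesis to be uniform over a slightly larger compact set; otherwise only points $t_i\in K$ may be used as cube centres, which is also possible. For $M>0$, let $G_M$ be the event $\{\sup_{t\in K_\delta}\norm{D\Phi(t)}{}\leq M\}$. Because $\Phi$ is almost surely $\mathcal{C}^1$ on $\Omega$ and $K_\delta$ is compact, the supremum is almost surely finite, so $\Proba{G_M^c}\to 0$ as $M\to\infty$.

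\emph{Covering and mean value estimate.} For $\varepsilon<\delta/\sqrt n$, cover $K$ by the cubes of the grid $\varepsilon\mathbb{Z}^n$ that meet $K$. There are at most $N_\varepsilon\leq C_K\varepsilon^{-n}$ of them, and each is a convex set contained in $K_\delta$. Pick a centre $t_i$ in each cube $Q_i$. If $\Phi(t)=0$ for some $t\in Q_i$ and $G_M$ holds, then the mean value inequality on the segment $[t_i,t]\subset Q_i$ gives $\norm{\Phi(t_i)}{}\leq M\sqrt n\,\varepsilon$. For $\varepsilon$ small enough that $M\sqrt n\,\varepsilon\leq\rho$, the bounded density then gives
$$\Proba{\norm{\Phi(t_i)}{}\leq M\sqrt n\,\varepsilon}\leq D\,\omega_{n+1}(M\sqrt n\,\varepsilon)^{n+1},$$
where $\omega_{n+1}$ is the volume of the unit ball of $\R^{n+1}$.

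\emph{Union bound.} Summing over the cubes,
$$\Proba{\exists t\in K,\ \Phi(t)=0}\leq \Proba{G_M^c}+C_K\varepsilon^{-n}D\,\omega_{n+1}(M\sqrt n)^{n+1}\varepsilon^{n+1}.$$
First let $\varepsilon\to0$ with $M$ fixed; the second term is $O_M(\varepsilon)$ and vanishes. Then let $M\to\infty$, which kills the first term. Hence the probability is $0$.

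The only delicate point is the uniformity requirements. The density bound must hold at the centres $t_i$, which is why I keep the cubes inside a compact set where the bound is uniform. The derivative bound must hold on convex sets containing both $t_i$ and the zero, which the cube geometry guarantees. Measurability of the event $\{\exists t\in K,\ \Phi(t)=0\}$ can be handled by working with the outer probability, or by noting that by continuity the event equals $\{\inf_{t\in K\cap\mathbb{Q}^n\text{-dense}}\norm{\Phi(t)}{}=0\}$. No other obstacle is expected.
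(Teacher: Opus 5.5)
Your covering/first-moment argument is correct and is essentially the standard proof behind the cited \cite[Lemma 11.2.10]{AT07}; the paper itself gives no proof and simply imports the lemma. Of the two options you mention, use representatives $t_i\in Q_i\cap K$, since the density bound is only assumed for $t\in K$. For measurability, take the infimum over a countable dense subset of $K$, because $K\cap\mathbb{Q}^n$ need not be dense in $K$.
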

This result basically states that under mild assumptions one may not expect to find $n$ random reals satisfying $n+1$ random conditions at the same time.
We now are ready to prove Proposition \ref{prop:diff_at_given_point}.
\begin{proof}[Proof of Proposition \ref{prop:diff_at_given_point}]
We first show that almost surely there are no two points $0\leq T_1<T_2$ such that $\alpha(0)=\tau_f(T_1,0)=\tau_f(T_2,0).$
We first prove that almost surely there are no two such points $T_1<T_2$ with $T_1>0$. Indeed, a simple computation shows that any $T>0$ satisfying $\alpha(0)=\tau_f(T,0)$ must verify $\alpha(0)=\frac{\partial f}{\partial z_1}(Te_1)$. Therefore, it is enough to check that there do not exist $0<T_1<T_2$ such that $\Phi(T_1,T_2)=0$ where $\Phi$ is the following random map :
$$
\begin{array}{ccccc}
    \Phi & :&  \mathbb{R}_+^2 &\to& \mathbb{R}^3 \\
     & & (T_1,T_2) & \mapsto & \begin{pmatrix} f(T_2e_1)-f(0)-T_2\frac{\partial f}{\partial z_1}(T_2e_1) \\
     f(T_1e_1)-f(0)-T_1\frac{\partial f}{\partial z_1}(T_1e_1) \\
     \frac{\partial f}{\partial z_1}(T_1e_1)-\frac{\partial f}{\partial z_1}(T_2e_1)
     \end{pmatrix}
\end{array}$$
By restricting the application $\Phi$ to the compact
$$K_n := \left\{(T_1,T_2)\in [0,\infty[^2\ |\ \frac{1}{n}\leq T_1\text{ and } T_1+\frac{1}{n}\leq T_2\text{ and } T_2\leq n\right\},$$ we may apply Lemma \ref{lemma:adler} and see that almost surely there does not exist a solution of $\Phi(T_1,T_2)=0$ in the set $K_n$. Letting $n$ go to infinity we deduce that almost surely there is no solution of $\Phi(T_1,T_2)=0$ with $0<T_1<T_2$. A very similar argument shows that almost surely if $\alpha(0)=\tau_f(0,0)$ then there is no $T>0$ such that $\alpha(0)=\tau_f(T,0).$ This can be done by replacing $\Phi$ in the above argument by $$\tilde{\Phi} : T\mapsto \begin{pmatrix}
f(Te_1)-f(0)-T\frac{\partial f}{\partial z_1}(Te_1) \\
\frac{\partial f}{\partial z_1}(Te_1)-\frac{\partial f}{\partial z_1}(0)
\end{pmatrix}.$$
Now we will apply Lemma \ref{lemma:max_tracking_technical} to $X=\tau_f$. We have already seen that this $X$ satisfies the first assumption of Lemma \ref{lemma:max_tracking_technical} almost surely. Also the third assumption of Lemma \ref{lemma:max_tracking_technical} is also verified almost surely. Indeed, this is a consequence of the Borell-TIS inequality (see for instance Lemma 2.6 in \cite{Shadow2}). It only remains to show that almost surely, $X$ verifies the second assumption of Lemma \ref{lemma:max_tracking_technical}. Let $T_0\geq 0$ be the unique point such that $\alpha(0)=X(T_0,0).$ If $T_0>0$ we must therefore have $\frac{\partial X}{\partial t}(T_0,0)=0$. If we also had $\frac{\partial^2 X}{\partial t^2}(T_0,0)=0$ then $T_0>0$ would be solution of $\Psi(T)=0$ where $\Psi$ is defined by
$$\begin{array}{ccccc}
    \Psi & : & \R_+ & \to & \R^2  \\
     & & T & \mapsto & \begin{pmatrix}\frac{\partial X}{\partial t}(T,0) \\ \frac{\partial^2 X}{\partial t^2}(T,0) \end{pmatrix}.\end{array}$$
    By applying Lemma \ref{lemma:adler}, one can see that almost surely there is no $T>0$ such that $\Psi(T)=0$. Therefore in the case where $T_0>0$, the second assumption of Lemma \ref{lemma:max_tracking_technical} is verified. The proof is even easier in the case $T_0=0$ since verifying the second condition of Lemma \ref{lemma:max_tracking_technical} boils down to verifying that $\frac{\partial f^2}{\partial z_1^2}(0)\neq 0$ which is true almost surely. This concludes the proof since $\nabla_h \tau_f=\tau_{\nabla f}.$
\end{proof}

Although the next Proposition is not technically needed for the proof of Proposition \ref{prop:kac_rice_alpha}, it will be of used later and gives a good intuition of what does the set $\mathcal{Z}_\ell(\alpha,B)$ introduced by \eqref{eq:zl} look like. To understand this proposition it is good to have in mind that if the field $\alpha$ were of class $\mathcal{C}^1$ then we would expect the level sets $\{\alpha=\ell\}$ to be a $1$-dimensional smooth variety included in $\R^2$, that is a collections of topological circles and lines which can be parameterized by $\mathcal{C}^1$ functions. The excursion set $\{\alpha\leq \ell\}$ would be a smooth $2$-dimensional sub-manifold of $\R^2$ whose boundary coincides with $\{\alpha=\ell\}.$ However, $\alpha$ is not $\mathcal{C}^1$ (it is only localyy Lipschitz as we will see later in Lemma \ref{lemma:lipschitz_as}). Therefore, one cannot expect such a good behaviour. Nevertheless, the next proposition shows that the loss of regularity is not too big. 
\begin{proposition}
\label{prop:good_level_set}
Let $\ell>0$ be fixed. Almost surely, for any $z_0\in \R^2$ such that $\alpha(z_0)=\ell$, there exists $r_0>0$ such that the intersection of $\{\alpha =\ell\}$ with the open Euclidean ball of radius $r_0$ centered at $z_0$ can be parameterized by an injective continuous and piecewise $\mathcal{C}^1$ function where $z_0$ is the only possible point where differentiability is lost.
\end{proposition}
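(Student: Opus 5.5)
Fix $\ell>0$. The idea is to write $\alpha$ near $z_0$ as a maximum of finitely many $\mathcal{C}^1$ branches, each of which is a local maximizer of $t\mapsto \tau_f(t,z)$. A level set of a maximum of two transversal $\mathcal{C}^1$ functions is a union of at most two $\mathcal{C}^1$ arcs meeting at $z_0$.

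\textbf{Step 1: almost sure genericity.} Using Lemma \ref{lemma:adler} on compact sets exhausting the relevant parameter domains, as in the proof of Proposition \ref{prop:diff_at_given_point}, we establish the following properties, almost surely and simultaneously for all $z$.
\begin{enumerate}
\item[(a)] At any $z$ with $\alpha(z)=\ell$, the supremum is attained at no more than two times $T_1<T_2$. For three times we count unknowns $(z,T_1,T_2,T_3)\in\R^5$ against equations: $\tau_f(T_i,z)=\ell$ and $\partial_t\tau_f(T_i,z)=0$ for each $i$ give $6>5$ conditions. If one time is $0$, we drop one criticality condition and the count still works.
\item[(b)] Every maximizer is nondegenerate: $\partial_t^2\tau_f(T,z)\neq 0$ when $T>0$, and $\partial_t\tau_f(0,z)\neq0$ when $T=0$. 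The count is $(z,T)\in\R^3$ against $4$ conditions: value $\ell$, first derivative zero, second derivative zero.
\item[(c)] If there is a unique maximizer $T$, then $\tau_{\nabla f}(T,z)\ne0$. The count is $(z,T)$ against the value, criticality and two gradient conditions.
\item[(d)] If there are two maximizers, then $\tau_{\nabla f}(T_1,z)$ and $\tau_{\nabla f}(T_2,z)$ are not parallel. The count is $(z,T_1,T_2)\in\R^4$ against $2+2+1=5$ conditions. We also require each of these vectors to be nonzero.
\end{enumerate}
The fact that $\ell$ is fixed is what supplies the extra equation in each count. Each count requires the relevant Gaussian vectors to have bounded densities. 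This is where nondegeneracy of the joint law of $f$ and its derivatives at several points enters. We expect it to follow from Assumption \ref{a:a1}, perhaps after restricting away from diagonals $T_i=T_j$ via the compacts $K_n$ as before. This step is the main obstacle.

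\textbf{Step 2: local branches.} Fix $z_0$ with $\alpha(z_0)=\ell$. By (b), the third assumption of Lemma \ref{lemma:max_tracking_technical} (Borell--TIS) and the implicit function theorem, each maximizer $T_i$ extends to a $\mathcal{C}^1$ family of local maximizers $T_i(z)$ near $z_0$, with $g_i(z):=\tau_f(T_i(z),z)$ of class $\mathcal{C}^1$ and $\nabla g_i(z_0)=\tau_{\nabla f}(T_i,z_0)$. The compactness argument of Lemma \ref{lemma:max_tracking_technical} then shows $\alpha=\max_i g_i$ in a ball around $z_0$, because any global maximizer must lie near one of the $T_i$.

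\textbf{Step 3: geometry of the level set.}
\begin{itemize}
\item \emph{One branch.} Here $\{\alpha=\ell\}=\{g_1=\ell\}$, and by (c) and the implicit function theorem this is a $\mathcal{C}^1$ arc through $z_0$.
\item \emph{Two branches.} By (d), $\{g_1=\ell\}$ and $\{g_2=\ell\}$ are $\mathcal{C}^1$ curves crossing transversally at $z_0$, which divide a small ball into four sectors. The set $\{\max(g_1,g_2)=\ell\}$ consists of points where one $g_i$ equals $\ell$ and the other is at most $\ell$. This is exactly one half-curve of $\{g_1=\ell\}$ and one half-curve of $\{g_2=\ell\}$, each selected by the sign of the other function. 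Concatenating these two half-arcs gives an injective continuous path through $z_0$, which is $\mathcal{C}^1$ except possibly at $z_0$.
\end{itemize}
Taking $r_0$ small enough that all the above neighborhoods apply finishes the argument.
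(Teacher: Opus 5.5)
\textbf{Overall.} Your route is the paper's: Lemma \ref{lemma:adler} to put everything in generic position, at most two maximizers, implicit-function branches around each maximizer, and two half-arcs glued at $z_0$. Your Step 3 (the level set of $\max(g_1,g_2)$ with non-parallel gradients) is a cleaner form of the paper's selection-by-sign argument, with (d) playing the role of its nonvanishing $y$-derivative. Incidentally, for $T_i>0$ one has $\partial_x g_i(z_0)=(\ell-\partial_x f(z_0))/T_i>0$, because $0$ is not a maximizer. So (c), and the nonvanishing part of (d), are automatic, and both curves are graphs over $y$, which is exactly the paper's parametrization.

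\textbf{The gap: the count in (b).} The conditions you list, $\tau_f(T,z)=\ell$, $\partial_t\tau_f(T,z)=0$ and $\partial_t^2\tau_f(T,z)=0$, are three equations in the three unknowns $(z,T)$, not four. So Lemma \ref{lemma:adler} yields nothing, and degenerate critical points at level $\ell$ do occur generically as isolated points. For $T>0$ this is repairable. A degenerate critical point that is a local maximum must also satisfy $\partial_t^3\tau_f(T,z)=0$. Differentiating $t\,\tau_f(t,z)=f(z+te_1)-f(z)$ turns the four conditions into $f(z+Te_1)-f(z)=\ell T$, $\partial_x f(z+Te_1)=\ell$ and $\partial_x^2 f(z+Te_1)=\partial_x^3 f(z+Te_1)=0$. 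This is a genuine $4>3$ count, at the price of using third derivatives of $f$, i.e.\ slightly more regularity than $\mathcal{C}^2$.

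\textbf{For $T=0$, (b) is false.} The system $\partial_x f(z)=\ell$, $\partial_x^2 f(z)=0$ has isolated solutions in the plane. With positive probability, at one of them $\partial_x^3 f(z)<0$ and $t=0$ is the global maximizer. There $\alpha(z)=\ell$ and $\partial_t\tau_f(0,z)=\tfrac12\partial_x^2 f(z)=0$, so your Step 2 does not apply: the constant branch $T(z)\equiv 0$ came from the nondegenerate boundary case of Lemma \ref{lemma:max_tracking_technical}.

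\textbf{How to repair the $T=0$ case.} The conclusion survives but needs its own argument.
\begin{enumerate}
\item Extend $\tau_f(t,z)=\int_0^1\partial_x f(z+ste_1)\,ds$ to $t<0$.
\item Solve $\partial_t\tau_f(T(z),z)=0$ near $(0,z_0)$ by the implicit function theorem. This uses $\partial_t^2\tau_f(0,z_0)=\tfrac13\partial_x^3 f(z_0)$, which is $\le 0$ by maximality and nonzero generically.
\item By strict concavity in $t$, $\alpha(z)=\tau_f(\max(T(z),0),z)$ near $z_0$.
\item This function is $\mathcal{C}^1$, since $\nabla_z\tau_f(T(z),z)$ and $\nabla_z\tau_f(0,z)$ agree on $\{T=0\}$.
\item Its gradient is $\nabla\alpha(z_0)=(0,\partial_{xy}f(z_0))$, which is nonzero generically (three equations in $z\in\R^2$). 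Hence $\{\alpha=\ell\}$ is a $\mathcal{C}^1$ arc near $z_0$.
\end{enumerate}
The paper's one-maximizer case passes over the same point. It appeals to Proposition \ref{prop:diff_at_given_point}, which only gives nondegeneracy at a fixed $z$.

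\textbf{The density issue you flag.} This is milder than you fear. Since $\beta>2$, $q\in L^1$, so $\hat q$ is continuous and nonzero on an open set. Then any finite family of values and derivatives of $f$ at distinct points has a nondegenerate Gaussian law, with densities bounded uniformly on your compacts.
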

\begin{proof}
The proof is quite technical and there are several cases to study one by one. We will focus only the main cases as the other ones can be dealt with in the exact same way with minor adjustments. In order to understand the level set $\{\alpha=\ell\}$ around a point $z_0=(x_0,y_0)$, one needs to understand the local behaviour of $\alpha$ near $z_0$. When $z_0\in \R^2$ is fixed, we see, by the same arguments that the ones used in Proposition \ref{prop:diff_at_given_point}, that almost surely $\alpha$ is differentiable at $z_0$. This relied on the fact that, for fixed $z_0$, almost surely, there exists only one $t\geq 0$ such that $\alpha(z_0)=\tau_f(t,z_0).$ However, since we want to deal with all points $z_0\in \R^2$ simultaneously, such an argument is not sufficient for Proposition \ref{prop:good_level_set}. In fact, there will exist random points $z_0$ for which there are two reals $T_1<T_2$ such that $\alpha(z_0)=\tau_f(T_1,z_0)=\tau_f(T_2,z_0).$ A first step to prove the proposition is to show that two reals $T_1$ and $T_2$ is the maximum we can have, there cannot be three reals or more. More precisely, when $\ell>0$ is fixed we will show that, almost surely, there exists no point $z_0=(x_0,y_0)\in \R^2$ together with $0\leq T_1< T_2< T_3$ such that $\ell=\alpha(z_0)=\tau_f(T_1,z_0)=\tau_f(T_2,z_0)=\tau_f(T_3,z_0).$
In fact, let us prove this situation cannot happen with the additional condition $T_1>0$ (the case $T_1=0$ can be dealt with separately). We consider the following map :
$$\begin{array}{ccccc}
g & : & \R^2 \times \R_+^* &\to& \mathbb{R}^2  \\
     & & (x,y, t) & \mapsto &  \left(\frac{\partial f}{\partial x}(x+t,y)-\ell, f(x+t,y)-f(x,y)-\ell t\right).
\end{array}.$$
As we have already seen, if $T>0$ is such that $\alpha((x,y))=\tau_f(t,(x,y))=\ell$ then $g(x,y,T)=0$.
We thus consider the map
$$\begin{array}{ccccc}
    G & : & \R^2 \times (\R_+^*)^3& \to & \mathbb{R}^6 \\
     & & (x,y,t_1,t_2,t_3) & \mapsto & (g(x,y,t_1),g(x,y,t_2),g(x,y,t_3)) 
\end{array}$$
By restricting the application $G$ to the compact $$K_n= [-n,n]^2\times \left\{(t_1,t_2,t_3)\in (\mathbb{R}_+^*)^3| \frac{2}{n}\leq t_1+\frac{1}{n}\leq t_2 \leq t_3-\frac{1}{n}\leq n\right\},$$
and applying Lemma \ref{lemma:adler} to each $K_n$, we see that almost surely there are no point $(x,y)\in \R^2$ with $\alpha(x,y)=\ell$ and with $0<T_1<T_2<T_3$ that realise the supremum in the definition of $\alpha(x,y)$. As we mentioned earlier this is still the case if we allow $T_1=0$ by a slight adaptation to the argument.

Now, let $(x_0,y_0)\in \R^2$ such that $\alpha(x_0,y_0)=\ell$. There are two cases. The first possibility is that there exists only one $t\geq 0$ such that $\alpha(x_0,y_0)=\tau_f(t,(x_0,y_0))$. In this case, as seen in the proof of Proposition \ref{prop:diff_at_given_point}, then $\alpha$ is locally $\mathcal{C}^1$ at $(x_0,y_0)$. Moreover classical arguments show that under the constraint $\alpha(x_0,y_0)=\ell$ we cannot have also $\nabla \alpha(x_0,y_0)=0$. Therefore the implicit function theorem implies that locally around $(x_0,y_0)$ then $\{\alpha=\ell\}$ looks like the graph of a $\mathcal{C}^1$ function. This concludes the proof of the Proposition in this particular case.

The other possibility, which is much more complicated, is that there exists $0\leq t_1<t_2$ such that $\alpha(x_0,y_0)=\tau_f(t_1,(x_0,y_0))=\tau_f(t_2,(x_0,y_0)).$ In the following we will assume $t_1>0$, but this can be generalized also to case $t_1=0.$ First we remark by the argument above that we have found $(x_0,y_0,t_1,t_2)$ such that $g(x_0,y_0,t_1)=0$ and $g(x_0,y_0,t_2)=0$ therefore we are left with no more degree of freedom in the sense that any other condition of the form $h(x_0,y_0,t_1,t_2)=0$ (with a reasonable $h$) can only happen if it is already implied by $g(x_0,y_0,t_1)=0$ and $g(x_0,y_0,t_2)=0$ (otherwise we could apply Lemma \ref{lemma:adler} and we would find a contradiction). This basically means that apart from $g(x_0,y_0,t_1)=0$ and $g(x_0,y_0,t_2)=0$, everything else is in generic position.

In particular, we may compute the derivative of $g$ with respect to $x$ and $t$ and we see that it is almost surely invertible. More precisely we have
$$D_{x,t}g(x_0,y_0,t_1) = \begin{pmatrix}\frac{\partial^2 f}{\partial x^2}(x_0+t_1,y_0) & \frac{\partial^2 f}{\partial x^2}(x_0+t_1,y_0) \\ \ell-\frac{\partial f}{\partial x}(x,y) & 0 \end{pmatrix}.$$
This matrix is almost surely invertible (in the sense that almost surely, for all $(x_0,y_0)$ which admits $0<t_1<t_2$ such that $\tau_f(t_1,(x_0,y_0))=\tau_f(t_2,(x_0,y_0))=\ell=\alpha(x_0,y_0)$ then this matrix is invertible). This is due to the fact that in generic position we must have $\frac{\partial f}{\partial x}(x_0,y_0)\neq \ell$ and $\frac{\partial^2 f}{\partial x^2}f(x_0+t_1,y_0)\neq 0$. Therefore by the implicit function theorem, we may find a small open interval $I_1\subset \R$ containing $y_0$ and a small open square $S_1=J_1\times K_1$ containing $(x_0,t_0)$ together with $\mathcal{C}^1$ smooth functions $X_0^1 : I_1 \to J_1$ and $T^1 : I_1 \to K_1$ such that
$$\forall y,x,t\in I_1\times J_1\times K_1,\ g(x,y,t)=0 \Leftrightarrow (x,t)=(X^1_0(y),T^1(y)).$$

We can do a similar construction around $(x_0,y_0,t_2)$ and we obtain also $\mathcal{C}^1$ maps $X_0^2 : I_2 \to J_2$ and $T_2 : I_2\to K_2$. We now define $I=I_1\cap I_2$ which is again a small open interval containing $y_0$. Given $y\in I$, we may wonder which of $(X_0^1(y),y)$ or $(X_0^2(y),y)$ indeed belong to $\{\alpha=\ell\}$. We claim that on one side of the interval $I$ then one of this two points will indeed belong to $\{\alpha=\ell\}$ whereas on the other side of the interval the other point will belong to $\{\alpha=\ell\}$. This will be enough to conclude to the proof of the conclusion. To see this, define $X^1(y):= X_0^1(y)+T^1(y)$ and $X^2(y):= X_0^2(y)+T^2(y)$. To known which of $X_0^1(y)$ or $X_0^2(y)$ is indeed the correct trajectory of $\{\alpha=\ell\}$ it is enough to look at the sign of the following quantity :
$$\frac{f(X^2(y))-f(X^1(y))}{X^2(y)-X^1(y)}-\ell.$$
Note that this quantity is equal to $0$ when $y=y_0$. Moreover almost surely, this quantity has a non zero derivative in terms of $y$ which shows that it is positive one one side of the interval $I$ and negative on the other side. This concludes the proof of the proposition.

\begin{figure}
    \centering
    \includegraphics[width=12cm]{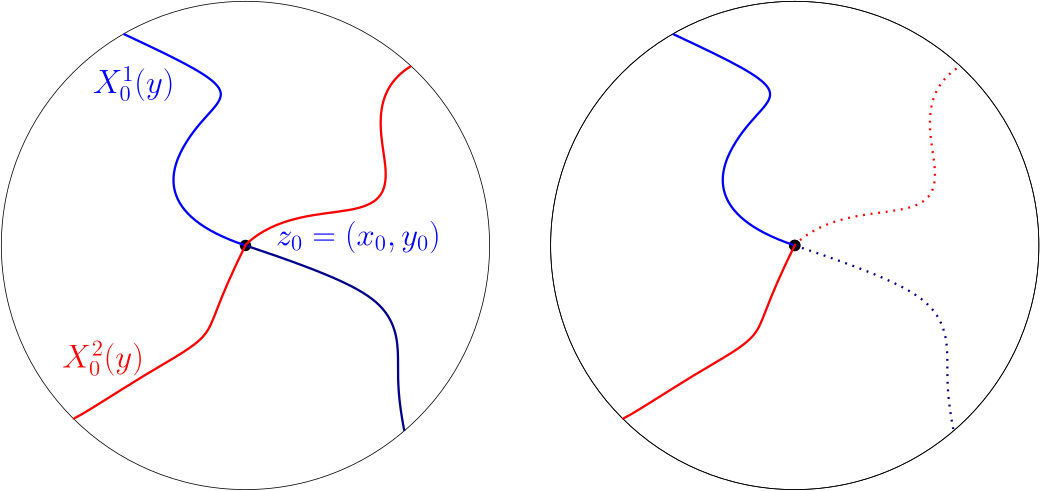}
    \caption{Illustration of the proof of Proposition \ref{prop:good_level_set}. On the left the two $\mathcal{C}^1$ curves $X_0^1$ and $X_0^2$. On the right, the resulting piecewise $\mathcal{C}^1$ curve.}
    \label{fig:piewewise_C1}
\end{figure}

\end{proof}

Our next Lemma shows that almost surely $\alpha$ is locally Lipschitz. This shows in particular that almost surely, $\alpha$ is differentiable almost everywhere in $\R^2$.
\begin{lemma}
\label{lemma:lipschitz_as}
Let $B\subset \R^2$ be a bounded domain. Then almost surely, the function $z\mapsto \alpha(z)$ restricted to $B$ is Lipschitz.
\end{lemma}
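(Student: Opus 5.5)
We will show that $\alpha$ is, on $B$, a supremum of a family of functions that are uniformly Lipschitz, after reducing the range of $t$ to a compact interval; a supremum of uniformly Lipschitz functions is Lipschitz with the same constant. We may assume $B$ is contained in a closed square $Q=[-R,R]^2$.

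\textbf{Step 1: controlling small $t$.} For $t\in[0,M]$ and $z,z'\in Q$, the mean value theorem gives
$$\tau_f(t,z)=\int_0^1\frac{\partial f}{\partial z_1}(z+ste_1)\,ds .$$
Hence
$$|\tau_f(t,z)-\tau_f(t,z')|\leq \sup_{w\in Q+[0,M]e_1}\norm{\nabla \partial_1 f(w)}{}\,\norm{z-z'}{}.$$
Since $f$ is almost surely $\mathcal{C}^2$, this supremum over a compact set is almost surely finite. So for each fixed $M$, the family $\{\tau_f(t,\cdot)\}_{t\in[0,M]}$ is uniformly Lipschitz on $Q$.

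\textbf{Step 2: reducing to $t\in[0,M]$ for a random finite $M$.} First, $\alpha(z)\geq \tau_f(0,z)=\partial_1 f(z)$, so $\inf_Q\alpha\geq -\sup_Q|\partial_1 f|=:-m$, which is almost surely finite. This lower bound can be negative, so we need a different comparison point. By stationarity and the third assumption of Lemma \ref{lemma:max_tracking_technical}, which was verified almost surely via Borell--TIS in the proof of Proposition \ref{prop:diff_at_given_point}, we know that $\sup_{z\in Q}|\tau_f(t,z)|\to 0$ as $t\to\infty$.

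We use the decay of $f$-increments in a uniform form. Write $\tau_f(t,z)=\frac{f(z+te_1)-f(z)}{t}$. Then
$$\sup_{z\in Q}|\tau_f(t,z)|\leq \frac{|f(z+te_1)|+\sup_Q|f|}{t}.$$
Almost surely $|f(w)|=O(\sqrt{\log \norm{w}{}})$ along the line segments involved, by Borell--TIS and a union bound over unit boxes. Therefore $\sup_{z\in Q}|\tau_f(t,z)|\to0$ almost surely.

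\textbf{Step 3: handling the case where $\alpha$ is not bounded below by a positive constant.} Here we need a Lipschitz comparison that works regardless of the sign of $\alpha$. We split $\sup_{t\geq0}\tau_f(t,z)=\max(S_M(z),S^M(z))$, where $S_M$ is the supremum over $t\leq M$ and $S^M$ is the supremum over $t\geq M$. The function $S_M$ is Lipschitz by Step 1.

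For $S^M$, write $\tau_f(t,z)=\frac{f(z+te_1)}{t}-\frac{f(z)}{t}$. Differentiating in $z$ gives
$$|\nabla_z\tau_f(t,z)|\leq \frac{|\nabla f(z+te_1)|+|\nabla f(z)|}{t}.$$
This is bounded by $\sup_{t\geq M}\frac{\sup_{w\in Q+te_1}\norm{\nabla f(w)}{}+\sup_Q\norm{\nabla f}{}}{t}$. That quantity is almost surely finite, because $\nabla f$ also grows at most like $\sqrt{\log}$ by Borell--TIS applied to the stationary field $\nabla f$ on unit boxes together with Borel--Cantelli.

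So in fact the entire family $\{\tau_f(t,\cdot)\}_{t\geq0}$ is uniformly Lipschitz on $Q$. The random Lipschitz constant is
$$L=\sup_{t\geq 0}\ \sup_{z\in Q}\norm{\nabla_z\tau_f(t,z)}{},$$
using Step 1 for $t\leq 1$ and the bound above for $t\geq1$. Since $\alpha$ is finite on $Q$ (by Step 2), $\alpha$ is $L$-Lipschitz on $Q$, hence on $B$.

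\textbf{Main obstacle.} The key estimate is the almost sure sublinear growth $\sup_{w\in Q+te_1}\norm{\nabla f(w)}{}=o(t)$. I would prove it by applying Borell--TIS to $\nabla f$ on the unit boxes $Q+ke_1$, $k\in\N$. This gives $\Proba{\sup_{Q+ke_1}\norm{\nabla f}{}>c\sqrt{\log k}}\leq k^{-2}$ for a suitable $c$, using stationarity and the finiteness of $\mathbb{E}\sup_Q\norm{\nabla f}{}$, which holds because $f$ is $\mathcal{C}^2$. Borel--Cantelli then concludes.
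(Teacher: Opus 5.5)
Your proof is correct, but it takes a different route from the paper.

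\textbf{The paper's route.} The paper invokes Lemma 2.6 of \cite{Shadow2} to get a random finite $C$ such that, for every $z\in B$, the supremum defining $\alpha(z)$ is attained at some $T(z)\in[0,C]$. It then only needs the Hessian bound $M$ on the compact set $[0,C+1]\times[0,1]$ and the one-sided comparison $\alpha(z)-\alpha(z')\leq \tau_f(T(z),z)-\tau_f(T(z),z')\leq M\norm{z-z'}{}$.

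\textbf{Your route.} You never localize the maximizer. Instead you show that the whole family $\{\tau_f(t,\cdot)\}_{t\geq 0}$ is uniformly Lipschitz on $Q$. For $t\leq 1$ you use the Hessian bound. For $t\geq 1$ you use $\norm{\nabla_z\tau_f(t,z)}{}\leq t^{-1}\bigl(\norm{\nabla f(z+te_1)}{}+\norm{\nabla f(z)}{}\bigr)$ together with at most linear growth of $\nabla f$ along the strip $Q+\R_+e_1$. You then use that a pointwise finite supremum of $L$-Lipschitz functions is $L$-Lipschitz.

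\textbf{What each buys.} The paper's argument is shorter given the cited lemma, and it fits the ``maximizer $T$'' picture used throughout Section \ref{sec:2}. However, confining the maximizers to a compact $t$-range is exactly where one uses that $\alpha$ stays away from $0$ on $B$, so that large $t$ (where $\tau_f(t,\cdot)$ is uniformly small) cannot compete. Your argument is self-contained, relying only on Borell--TIS and Borel--Cantelli. It needs neither attainment of the supremum nor positivity of $\alpha$. Beyond local $\mathcal{C}^2$ bounds, it only uses $\sup_{t\geq 1}t^{-1}\sup_{Q+te_1}\norm{\nabla f}{}<\infty$, plus the analogous bound for $f$ to get finiteness of $\alpha$. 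So the $\sqrt{\log}$ growth you establish is far more than required.

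\textbf{Minor cleanups.} The lower bound by $-m$ and the $S_M/S^M$ split are superfluous once you have the uniform gradient bound for all $t$. In Step 2, the first numerator term should read $\sup_{z\in Q}|f(z+te_1)|$. The translates $Q+ke_1$ are not unit boxes, but covering $Q+[k,k+1]e_1$ by finitely many translates of $Q$ fixes this trivially.
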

\begin{proof}
We only treat the case $B=[0,1]^2$ it is then straightforward to extend this to any bounded domain. By applying the results in \cite{Shadow2} (see in particular Lemma 2.6 in \cite{Shadow2}), then almost surely there exists a finite constant $C>0$ such that for any $z\in B=[0,1]^2$ there exists some $T(z)\in [0,C]$ such that $\alpha(z)=\tau_f(T(z),z)=\frac{f(z+T(z)e_1)-f(z)}{T(z)}$. Moreover if $z,z'\in B=[0,1]^2$ and $T\in [0,C]$ we have
$$|\tau_f(T,z)-\tau_f(T,z')|\leq M\norm{z-z'}{},$$
where $M=:\norm{\nabla^2 f}{[0,C+1]\times [0,1]}$ denotes the supremum norm of the second derivative of $f$ on the compact $[0,C+1]\times [0,1].$ Now, by definition of $\alpha(z')$ we have  $\alpha(z')\geq \tau_f(T(z),z')$ and we find that
$$\alpha(z)-\alpha(z')\leq \tau_f(T(z),z)-\tau_f(T(z),z')\leq M\norm{z-z'}{}.$$
We have a similar upper bound for $\alpha(z')-\alpha(z)$. Since $M$ is almost surely finite, (because $f$ is almost surely a $\mathcal{C}^2$ function and $C$ is finite almost surely) this shows that $\alpha$ is Lipschitz on $B$ and this concludes the proof.
\end{proof}
In the next Lemma we show that some random variables related to $\alpha(0)$ and $\nabla \alpha(0)$ have finite moments of all order. This will be useful to have an effective Kac-Rice formula where both terms that are equal are actually finite. This will also be of use when we will use arguments from the Malliavin calculus to show that the random variable $\alpha(0)$ admits a density with respect to the Lebesgue measure.

\begin{lemma}
\label{lemma:expectation_finite}
For any $u>0$ and any $k\in \mathbb{N}$ we have
\begin{enumerate}
    \item For any $u>0$, $\mathbb{E}\left[\norm{\nabla \alpha (0)}{}^k\mathds{1}_{\alpha(0)>u}\right]<+\infty.$
    \item $\mathbb{E}\left[\alpha(0)^k\right]<+\infty$.
\end{enumerate}
\end{lemma}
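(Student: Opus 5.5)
The plan is to reduce both statements to Gaussian supremum bounds for the smooth field $f$ and its derivatives. The key structural fact is Proposition \ref{prop:diff_at_given_point}: almost surely $\nabla\alpha(0)=\tau_{\nabla f}(T,0)$, where $T$ is the maximiser of $t\mapsto \tau_f(t,0)$.

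\textbf{Item 2.} We have $\alpha(0)\geq \tau_f(0,0)=\partial_1 f(0)$, which is Gaussian, so the negative part of $\alpha(0)$ has moments of all orders. For the positive part, the mean value theorem gives $\tau_f(t,0)\le \sup_{s\ge 0}|\partial_1 f(se_1)|$. This supremum over a half-line is infinite, so instead I split $t\le 1$ and $t\ge 1$:
\[
\alpha(0)\le \sup_{s\in[0,1]}|\partial_1 f(se_1)|+\sup_{t\ge 1}\frac{|f(te_1)|+|f(0)|}{t}.
\]
The first term is the supremum of a smooth Gaussian field on a compact set, so by Borell--TIS it has Gaussian tails. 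For the second term, I cover $[1,\infty)$ by the intervals $[n,n+1]$ and write $\sup_{t\ge1}|f(te_1)|/t\le \sum_{n\ge 1} n^{-1}M_n$, where $M_n=\sup_{[n,n+1]}|f|$. By stationarity the $M_n$ all have the law of $M_1$, which has all moments. However, $\sum 1/n$ diverges, so I would rather use the union bound
\[
\mathbb{P}\Bigl(\sup_n M_n/n>u\Bigr)\le \sum_n \mathbb{P}(M_1>nu)\le \sum_n Ce^{-cn^2u^2}\le C'e^{-cu^2}\quad (u\ge1).
\]
This is essentially Lemma 2.6 of \cite{Shadow2}, and it gives Gaussian tails, hence all moments.

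\textbf{Item 1.} On $\{\alpha(0)>u\}$ the maximiser $T$ satisfies $\tau_f(T,0)>u$. Since $\tau_f(T,0)\le \sup_{t\ge 1}|f(te_1)|/t+|f(0)|/T$ for $T\geq 1$, we get
\[
T\le \frac{\sup_{[0,T]}|f|\cdot 2}{u}, \qquad \text{more usefully}\qquad u\,T\le |f(Te_1)|+|f(0)|.
\]
Now $|\tau_{\nabla f}(T,0)|\le \sup_{s\in[0,T]}\|\nabla^2 f(se_1)\|$ when $T\le1$, and $|\tau_{\nabla f}(T,0)|\le 2\sup_{[0,T]}\|\nabla f\|/T$ when $T\ge1$. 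It therefore suffices to control, on $\{\alpha(0)>u\}$, the quantity $Y=\sup_{s\in[0,T+1]}(\|\nabla f\|+\|\nabla^2 f\|)(se_1)$.

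On the event $\{T\in[n,n+1]\}$ we have $u n\le 2\sup_{[0,n+1]}|f|$, so
\[
\mathbb{E}\bigl[Y^k\mathds 1_{T\in[n,n+1],\alpha(0)>u}\bigr]\le \mathbb{E}\bigl[\sup_{[0,n+2]}(\cdots)^{2k}\bigr]^{1/2}\,\mathbb{P}\Bigl(\sup_{[0,n+1]}|f|\ge un/2\Bigr)^{1/2}.
\]
The supremum of a stationary field over $[0,n+2]$ has moments growing at most like $(\log n)^{k}$; this follows by bounding it by the maximum of $n+2$ copies and applying Borell--TIS. Similarly, $\mathbb{P}(\sup_{[0,n+1]}|f|\ge un/2)\le (n+1)Ce^{-cu^2n^2}$. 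Summing over $n$ converges for each fixed $u>0$.

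\textbf{Main obstacle.} The hard step is the last one: decoupling the random location $T$ from the supremum of the derivatives. The union bound over unit intervals, combined with Cauchy--Schwarz, handles it. This is also exactly where $u>0$ is needed, since for $u\le 0$ the maximiser $T$ could be arbitrarily large and the bound on its location is lost.
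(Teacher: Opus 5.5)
Your proof is correct. It rests on the same ingredients as the paper's: the closed form $\nabla\alpha(0)=\tau_{\nabla f}(T,0)$ from Proposition \ref{prop:diff_at_given_point}, the observation that $\alpha(0)>u$ forces $f(Te_1)-f(0)>uT$ (so a large $T$ is Gaussian-unlikely), and Borell--TIS combined with union bounds over unit intervals. The bookkeeping differs in two places.

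For item 2, you dominate $\alpha(0)$ pathwise by $\sup_{s\in[0,1]}|\partial_1 f(se_1)|+\sup_{t\geq1}|f(te_1)|/t+|f(0)|$. This never involves the maximiser $T$, so it does not need Proposition \ref{prop:diff_at_given_point}, and it gives Gaussian tails for $\alpha(0)$ directly. You also treat the negative part explicitly, which the paper leaves implicit (via $\alpha(0)\geq 0$). The paper instead splits $\Proba{\alpha(0)^k\geq x}$ according to whether $T\leq x$ or $T\geq x$. It bounds the first piece by a supremum of $\nabla f$ on $[-1,x+1]\times[-1,1]$, and the second by the linear-growth event $\{\exists t\geq x,\ f(te_1)>f(0)+ut\}$.

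For item 1, the paper works with tails. It ties the threshold on $T$ to the level $x$ of $\norm{\nabla\alpha(0)}{}^k$, which gives a tail bound $Cxe^{-cx^{2/k}}+Ce^{-cu^2x^2}$ that it then integrates in $x$. You instead decompose over the events $\{T\in[n,n+1]\}$ and decouple the location of $T$ from the derivative suprema by Cauchy--Schwarz. This uses the fact that moments of suprema over $[0,n+2]$ grow only polynomially in $n$.

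Your route is a bit longer but makes the summability and the role of $u>0$ fully explicit. Note that your Borell--TIS bound for $\Proba{\sup_{[0,n+1]}|f|\geq un/2}$ only holds for $n\geq n_0(u)$, with the finitely many remaining terms absorbed into constants depending on $u$. The paper's route is shorter and uses only the Hessian. By the mean value theorem, $\norm{\tau_{\nabla f}(T,0)}{}\leq\sup_{[0,T]}\norm{\nabla^2 f}{}$ for every $T$, so your separate $\norm{\nabla f}{}$ bound for $T\geq1$ is unnecessary, though harmless.
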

\begin{proof}
Let $k\in \mathbb{N}$.
According to Proposition \ref{prop:diff_at_given_point}, we see that we can write
$$\alpha(0)=\frac{f(Te_1)-f(0)}{T}$$
and
$$\nabla \alpha(0) = \frac{\nabla f(Te_1)-\nabla f(0)}{T},$$
for some random $T\geq 0$ (if $T=0$ the two above quantities should be interpreted as a limit when $T$ goes to $0$).
Let $u>0$. We argue that on the event $\{\alpha(0)>u\}$ then is very unlikely to have $T$ very large. Therefore one may control $\nabla{\alpha(0)}$ simply by controlling the norm of the Hessian of $f$ in a bounded region (which will not be to large). To be more precise let $x\geq 1$. By an union bound we have
\begin{align}
    \label{eq:zer1}
    \Proba{\norm{\nabla{\alpha(0)}}{}^k \geq x \text{ and } \alpha(0)>u} \leq &\Proba{\norm{\nabla \alpha(0)}{}^k\geq x \text{ and }T\leq x}\nonumber\\&+\Proba{\alpha(0)>u \text{ and }T\geq x}.
\end{align}
Let us denote $\mathcal{A}_x$ and $\mathcal{B}_x^u$ the two events
\begin{equation*}
    \mathcal{A}_x := \{\norm{\nabla\alpha(0)}{}^k\geq x \text{ and } T \leq x\},
\end{equation*}
\begin{equation*}
    \mathcal{B}_x^u := \{\alpha(0)>u \text{ and } T \geq x\}.
\end{equation*}
On the event $\mathcal{B}_x^u$ there exists $t\geq x$ such that $f(te_1)>f(0)+ut$. By the Borell-TIS inequality, the probability that there exists $t\in [n,n+1]$ such that $f(te_1)>f(0)+un$ is bounded from above by $Ce^{-cu^2n^2}$ where $C,c>0$ are constants. By summing over all $n\geq \lfloor x\rfloor$ we find that there exist constants $C,c>0$ such that
\begin{equation}
\label{eq:zer2}
    \Proba{\mathcal{B}_x^u}\leq Ce^{-cu^2x^2}.
\end{equation}
On the event $\mathcal{A}_x$ then, given the closed form expression of $\nabla \alpha(0)$ in Proposition \ref{prop:diff_at_given_point}, we may find $t\in [0,x]$ such that $\norm{\frac{\nabla f(te_1)-\nabla f(0)}{t}}{}\geq x^{1/k}$. This implies in particular that the supremum norm of the Hessian of $f$ on the compact $[-1,x+1]\times [-1,1]$ must be greater than $x^{1/k}$. By the Borell-TIS inequality an an union bound the probability of this event is upper bounded by $Cxe^{-cx^{2/k}}$ where $C,c$ are two positive constants. We deduce that
\begin{equation}
\label{eq:zer3}
    \Proba{\mathcal{A}_x}\leq Cxe^{-cx^{2/k}.}
\end{equation}
With  \eqref{eq:zer1}, \eqref{eq:zer2} and \eqref{eq:zer3} we see that
\begin{equation*}
    \int_{\R_+}\Proba{\norm{\nabla\alpha(0)}{}^k>x\text{ and }\alpha(0)>u}dx<+\infty.
\end{equation*}
This concludes the proof of the first item of Lemma \ref{lemma:expectation_finite}. The proof of the second item is completely similar. We write
\begin{equation}
    \Proba{\alpha(0)^k \geq x}\leq \Proba{\alpha(0)^k\geq x\text{ and }T\leq x}+\Proba{\alpha(0)^k\geq x \text{ and }T\geq x}
\end{equation}
The second probability may be bounded from above exactly as we did for the event $\mathcal{B}_x$. For the first probability note that on the event $\{\alpha(0)^k\geq x \text{ and } T\leq x\}$ we may find $t\in [0,x]$ such that $\alpha(0)=\frac{\partial f}{\partial z_1}(te_1)$. Therefore on this event we must have $\norm{\nabla f}{\infty}\geq x^{1/k}$ where the supremum norm is again taken on a compact of the form $[-1,x+1]\times [-1,1]$. This event has probability less than $Cxe^{-cx^{2/k}}$ by the Borell-TIS inequality together with an union bound. We conclude the same way that this imply that $\alpha(0)$ has moments of all orders.
\end{proof}

\subsection{Existence of a density for $\alpha(0)$}
We now consider the problem of proving the following result:
\begin{proposition}
\label{prop:density_alpha0}
The random variable $\alpha(0)=\sup_{r>0}\frac{f(re_1)-f(0)}{r}$ admits a density $\phi_0\in L^1(\R)$ with respect to the Lebesgue measure on $\R$.
\end{proposition}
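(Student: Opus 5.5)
The plan is to use Malliavin calculus, as announced in the introduction of Section \ref{sec:2}. The underlying Gaussian space is the one generated by the white noise $W$ on $H=L^2(\R^2)$. Since $f(z)=W(q(\cdot-z))$, we have $Df(z)=q(\cdot-z)$ and $D\partial_{z_1}f(z)=\partial_{z_1}\bigl(q(\cdot-z)\bigr)$. I will invoke the classical Bouleau--Hirsch criterion: if a random variable $F$ lies in $\mathbb{D}^{1,2}_{\mathrm{loc}}$ (locally Lipschitz functionals of the noise with square-integrable derivative suffice) and $\norm{DF}{H}>0$ almost surely, then the law of $F$ is absolutely continuous with respect to the Lebesgue measure, which gives $\phi_0\in L^1(\R)$.

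\textbf{Step 1: Malliavin differentiability of $\alpha(0)$.} I would write $\alpha(0)=\sup_{t\geq 0}\tau_f(t,0)$. Each $\tau_f(t,0)=W\bigl(\tfrac{q(\cdot-te_1)-q(\cdot)}{t}\bigr)$ is a Gaussian element of the first chaos, with derivative $h_t:=\tfrac{q(\cdot-te_1)-q}{t}$. The map $t\mapsto h_t$ is continuous into $H$ and bounded, since $\norm{h_t}{H}\leq \norm{\partial_1 q}{L^2}$. I would then use the standard result for suprema of Gaussian processes (Nualart, Prop.~2.1.10): if $\sup_t$ of a continuous process $X_t$ with $\mathbb{E}[\sup_t X_t^2]<\infty$ and $\sup_t\norm{DX_t}{H}$ bounded, then the supremum is in $\mathbb{D}^{1,2}$. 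This needs two adaptations. First, the index set $[0,\infty[$ is not compact, so I would take a countable dense set, truncate to $t\leq n$ and let $n\to\infty$. Second, I need square-integrability of the supremum, which is given by Lemma \ref{lemma:expectation_finite}(2). The derivative is identified as $D\alpha(0)=h_T$, where $T$ is the almost surely unique maximizer from Proposition \ref{prop:diff_at_given_point}. This identification follows from uniqueness of the maximizer, exactly as in Nualart's argument. For $T=0$ one reads $h_0=-\partial_1 q$.

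\textbf{Step 2: Nondegeneracy.} I must show that $\norm{h_T}{H}>0$ almost surely. This is the main obstacle, because $T$ is random. By the Plancherel theorem,
$$\norm{h_t}{H}^2=\int |\hat q(\xi)|^2\Bigl|\tfrac{e^{-it\xi_1}-1}{t}\Bigr|^2 d\xi .$$
Since $q\not\equiv 0$ by Assumption \ref{a:a1}, $\hat q$ is a nonzero continuous function, so it is nonzero on an open set. On that open set the factor $|e^{-it\xi_1}-1|^2$ vanishes only on a null set of $\xi_1$ for each fixed $t>0$. For $t=0$ the factor becomes $\xi_1^2$, which is also nonzero almost everywhere. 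Hence $\norm{h_t}{H}>0$ for every $t\geq0$, deterministically. It follows that $\norm{D\alpha(0)}{H}=\norm{h_T}{H}>0$ almost surely, so the random location of $T$ causes no difficulty.

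The genuinely delicate part is therefore Step 1. The issue is to justify the derivative of a supremum over a noncompact parameter set, and to make sure the truncations $\sup_{t\leq n}$ converge in $\mathbb{D}^{1,2}$, or at least that $\alpha(0)\in\mathbb{D}^{1,2}_{\mathrm{loc}}$, which suffices for Bouleau--Hirsch. I would handle the convergence by showing the following: the truncated suprema converge in $L^2$ to $\alpha(0)$, by dominated convergence using Lemma \ref{lemma:expectation_finite}, and their derivatives are uniformly bounded in $L^2(\Omega;H)$ by $\norm{\partial_1 q}{L^2}$. Closability of $D$ then gives $\alpha(0)\in\mathbb{D}^{1,2}$.
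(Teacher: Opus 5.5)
Your proof is correct and uses the same Malliavin-calculus machinery as the paper. The route differs in how you handle the supremum over the non-compact set $[0,\infty[$ and the nondegeneracy.

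\emph{The paper's route.} It compactifies via $t=\tan s$, setting $Y(\pi/2)=0$, and applies Nualart's Proposition 2.1.11 directly. That criterion only asks for $\norm{DY(s)}{H}\neq 0$ at every point where the supremum is attained. So it needs neither uniqueness of the maximizer nor a formula for $D\alpha(0)$. It does, however, require continuity of $s\mapsto DY(s)$ at the added endpoint. It also needs $\alpha(0)>0$ almost surely, to keep the maximum away from $s=\pi/2$, where $DY$ vanishes.

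\emph{Your route.} You work with $M_n:=\sup_{0\le t\le n}\tau_f(t,0)$ and pass to the limit using the uniform bound $\norm{h_t}{H}\le\norm{\partial_1 q}{L^2}$. Strictly speaking this step is the weak-compactness lemma for $\mathbb{D}^{1,2}$, not closability. You then identify $D\alpha(0)=h_T$ using the almost sure uniqueness of the maximizer from Proposition \ref{prop:diff_at_given_point}, and conclude with Bouleau--Hirsch.

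The identification is clean if you argue as follows. On $\{T<n\}$ one has $M_n=\alpha(0)$ with the same unique maximizer $T$, so $DM_n=h_T$ there. Since $\Proba{T\geq n}\to 0$ and $\norm{DM_n}{H}$ stays uniformly bounded, $DM_n\to h_T$ strongly in $L^2(\Omega;H)$.

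\emph{Trade-offs.} Your route costs a dependence on Proposition \ref{prop:diff_at_given_point}, hence on Lemma \ref{lemma:adler}. In return it gives the stronger conclusion $\alpha(0)\in\mathbb{D}^{1,2}$ with an explicit derivative. Your Plancherel computation also actually proves the deterministic nondegeneracy $\norm{h_t}{H}>0$ for every $t\geq 0$, which the paper only asserts.

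\emph{A minor correction.} On each $[0,n]$, the hypothesis $\mathbb{E}[\sup_{t\le n}\tau_f(t,0)^2]<\infty$ comes from Fernique or Borell--TIS. Lemma \ref{lemma:expectation_finite} only controls $\alpha(0)$, not $\sup_t|\tau_f(t,0)|$. That lemma is instead what gives the $L^2$ convergence $M_n\to\alpha(0)$, through the domination $\tau_f(0,0)\le M_n\le\alpha(0)$.
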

We observe that although it is clear that for any fixed $0<r_1<\dots<r_n$, the random vector $(\frac{f(r_ie_1)-f(0)}{r_i})_{1\leq i \leq n}$ has a density with respect to the Lebesgue measure on $\R^n$, it is a priori not clear how to deduce the existence of a density for $\alpha(0)$ which is a supremum over an infinite set. This problem was already addressed in various contexts and using different techniques (see for instance \cite{AzaisDensity}, \cite{NualartMalliavin}). In order to prove Proposition \ref{prop:density_alpha0} we chose the approach from \cite{NualartMalliavin} (section 2) which uses techniques from Malliavin calculus. We refer the reader unfamiliar with Malliavin calculus to \cite{NualartMalliavin} or \cite{nualart2018introduction}. Recall that we denote by $K(s,t):= \mathbb{E}[f(s)f(t)]$ the covariance function of the field $f$. We define a Hilbert space $H$ as the repoducing Hilbert space associated to this $K$. That is, we let $G$ be the real vector space generated by all functions of the form $(K(s,\cdot))_{s\in \R^2}$, where $K(s,\cdot)$ denotes the function 
$$K(s,\cdot) : t\in \R^2 \mapsto K(s,t)\in \R.$$ We equip $G$ with the following scalar product $$\scal{K(s,\cdot)}{K(t,\cdot)}:= K(s,t).$$
The fact that it is a scalar product comes from the fact that $K$ is a covariance kernel and that the field $f$ is non-degenerated (which is implied by Assumption \ref{a:a1}). We define $H$ as the closure of $G$ for this scalar product. We obtain an Hilbert space $(H,\scal{\cdot}{\cdot}_H).$ In the following we denote by $L^2(\Omega, \R)$ the Hilbert space of square-integrable random variables on $\Omega$ taking values in $\R$ and by $L^2(\Omega, H)$ the Hilbert space consisting of square-integrable random variables taking values in $H$. We denote by $\mathcal{H}_1\subset L^2(\Omega, \R)$ the Hilbert space which is the closure of the vector space generated by all linear combinations of the random variables in the collection $(f(s))_{s\in R^2}.$ The Wiener map is defined as
$$
\begin{array}{ccccc}
    P &:& \mathcal{H}_1& \to& H \\
     & & X & \mapsto & (t\mapsto \mathbb{E}[Xf(t)])
\end{array}.
$$
We remark that $P(f(s))=K(s,\cdot)$ and therefore that $P$ is an isometry. We denote by $D : L^2(\Omega,\R) \to L^2(\Omega,H)$ the Malliavin derivative operator (which is only partially defined on $L^2(\Omega,\R)$), see the first section in \cite{NualartMalliavin}. We mention in particular that for any $s\in \R^2$ we have
\begin{equation}
\label{eq:malliavin_fs}
    D(f(s))=K(s,\cdot).
\end{equation}
Note that in \eqref{eq:malliavin_fs}, the quantity $K(s,\cdot)$ is to be interpreted as an element of $L^2(\Omega, H)$, that is the random variable on $\Omega$ which is deterministic and takes the constant value $K(s,\cdot)\in H.$ Therefore, by linearity, if we set for $t\neq 0$, $X(t)=\frac{f(te_1)-f(0)}{t}$ and for $t=0$, $X(0)=\frac{\partial f}{\partial z_1}(0)$ we find
\begin{equation}
    \label{eq:DXtMalliavin}
    D(X(t))=\frac{K(te_1,\cdot)-K(0,\cdot)}{t}
\end{equation}
\begin{equation}
    \label{eq:DX0Malliavin}
    D(X(0))=\frac{\partial K}{\partial x_1}(0,\cdot).
\end{equation}
We now provide the proof of Proposition \ref{prop:density_alpha0}.

\begin{proof}[Proof of Proposition \ref{prop:density_alpha0}.]
Recall that $X(t)$ was defined for $t> 0$ as
$$X(t)= \frac{f(te_1)-f(0)}{t}.$$
And for $t=0$ by
$$X(0)=\frac{\partial f}{\partial z_1}(0).$$
Note that under Assumption \ref{a:a1}, $X$ is almost surely continuous on $\R_+$.
Let $S=[0,\frac{\pi}{2}]$. For $s\in [0,\frac{\pi}{2}[$ we let $Y(s)$ be defined by
$$Y(s) := X(\tan(s)).$$
We also set $Y(\frac{\pi}{2})=0.$
We remark that since $\alpha(0)>0$ almost surely, we have $\alpha(0)=\sup_{t\geq 0} X(t) = \sup_{s\in [0,\frac{\pi}{2}]}Y(s).$
For convenience we denote by $M := \sup_{s\in [0,\frac{\pi}{2}]}Y(s).$
According to Proposition 2.1.11 in \cite{NualartMalliavin}, to prove that the law of $M$ admits a density with respect to the Lebesgue measure on $\R$, it is enough to verify the following properties:
\begin{enumerate}
    \item Almost surely, $s\mapsto Y(s)$ is continuous on $S=[0,\frac{\pi}{2}].$
    \item $\mathbb{E}[M^2]<+\infty$.
    \item Almost surely, the map $s\mapsto D(Y(s))$ from $S$ to $H$ is continuous.
    \item $\mathbb{E}[\sup_{s\in S}\norm{D(Y(s)}{H}^2]<+\infty$.
    \item Almost surely, for any $s\in S$ such that $Y(s)=M$ then $\norm{D(Y(s))}{H}$ is non-zero.
\end{enumerate}
For the first point we clearly see by composition that almost surely $s\mapsto Y(s)$ is continuous on $[0,\frac{\pi}{2}[$. For the continuity at $\frac{\pi}{2}$ we use the fact that almost surely there exists a random positive constant $C>0$ such that for all $t\geq 0$, $|f(te_1)|\leq C\sqrt{t+1}$ (this is a consequence of the Borell-TIS inequality see also \cite{Shadow2} for a more detailed proof). Therefore, almost surely $X(t)=\frac{f(te_1)-f(0)}{t}\xrightarrow[t\to \infty]{}0$. This proves the continuity of $Y$ at $\frac{\pi}{2}.$
The second item is a direct consequence of Lemma \ref{lemma:expectation_finite} for $k=2$.
For the third item, if $s,t>0$ then by \eqref{eq:DXtMalliavin} we have,
\begin{align*}
    \norm{D(X(t))-D(X(s))}{H}^2 =& \frac{K(t,t)-2K(0,t)+K(0,0)}{t^2} \\
    &+\frac{K(s,s)-2K(0,s)+K(0,0)}{s^2}\\
    &-2\frac{K(s,t)-K(0,s)-K(0,t)+K(0,0)}{st}.
\end{align*}
This quantity clearly converges to $0$ when $s$ converges to $t>0.$
We also have
$$\norm{D(X(t))}{H}^2 = \frac{K(t,t)-2K(0,t)+K(0,0)}{t^2},$$
which clearly converges to $0$ as $t$ goes to infinity. This shows the continuity at $\frac{\pi}{2}$ of $s\mapsto D(Y(s)).$
Finally for $t>0$ we have by \eqref{eq:DXtMalliavin} and \eqref{eq:DX0Malliavin},
\begin{align*}
    \norm{D(X(0)) - D(X(t))}{H}^2 =& \frac{\partial^2 K}{\partial x_1 \partial y_1}(0,0) -2\frac{\frac{\partial K}{\partial x_1}(t,0) -\frac{\partial K}{\partial x_1}(0,0)}{t}\\&+\frac{2K(0,0)-2K(0,t)}{t^2}.
\end{align*}
By assumption we have a $\mathcal{C}^2$ function $\kappa : \R^2 \to \R$ such that $K(x,y)=\kappa(x-y)=\kappa(y-x)$ (one may take $\kappa=q\ast q$). Therefore we see that
$$\frac{\partial K}{\partial x_1}(x,y)=\frac{\partial \kappa}{\partial x_1}(x-y)=-\frac{\partial \kappa}{\partial x_1}(y-x).$$
Therefore $\frac{\partial K}{\partial x_1}(0,0)=0.$
We also see that
$$\frac{\partial^2 K}{\partial x_1^2}(x,y)=\frac{\partial^2\kappa}{\partial x_1^2}(x-y) = -\frac{\partial^2 K}{\partial x_1\partial y_1}(x,y).$$
Using these relations and doing a series expansion of $K(0,t)$ near $0$ we see that as $t$ goes to $0$ we have
$$\norm{D(X(0)) - D(X(t))}{H}^2\xrightarrow[t\to 0]{}0.$$
This concludes the proof of the third item.
For the fourth item we have seen that $t\mapsto \norm{D(X(t))}{H}^2$ is deterministic and continuous in $t$ and that it converges to $0$ when $t$ goes to infinity. Therefore there is nothing more to check for the fourth item.
For the fifth item we note that as soon as $s\in [0,\frac{\pi}{2}[$ we have $\norm{D(X(t))}{H}^2\neq 0$. Therefore it is enough to check that almost surely we have $\alpha(0)>0$, which is indeed the case as mentioned earlier.
All the assumptions are verified and therefore the random variable $M=\alpha(0)$ admits a density with respect to the Lebesgue measure on $\R$.
\end{proof}

\subsection{Kac-Rice formula}
We now glue together the different arguments to prove Proposition \ref{prop:kac_rice_alpha}.

\begin{proof}[Proof of Proposition \ref{prop:kac_rice_alpha}]
We will prove a Kac-Rice formula for the field $\alpha$. The starting point is the coarea formula. Usually it is stated for function which are $\mathcal{C}^1$ however this coarea also holds for function that are Lipschitz (hence differentiable almost everywhere). Let $B=[0,1]^2\subset \R^2$. By applying Theorem 3.2.1 of \cite{Fed14} we see that if $g : B\to \R$ is Lipschitz, then for any test function $h : \R \to \R_+$ which is measurable we have
\begin{equation}
    \int_{B}h(g(z))\norm{\nabla g(z)}{}dz = \int_\R h(u)\sigma_u(g,B)du,
\end{equation}
where the equality is in $\R_+\cup\{+\infty\}.$
We observe that by Lemma \ref{lemma:lipschitz_as} then almost surely the function $\alpha$ is Lipschitz on $B$. Therefore almost surely, for any measurable test function $h : \R \to \R_+$ we have
\begin{equation}
    \int_{B}h(\alpha(z))\norm{\nabla \alpha(z)}{}dz = \int_\R h(u)\sigma_u(\alpha,B)du.
\end{equation}
Thus, we may take expectation on both sides. By interverting the order of integration by the Fubini-Tonelli Theorem (since everything is positive) we obtain
\begin{equation}
\label{eq:21}
    \int_B \mathbb{E}[h(\alpha(z))\norm{\nabla \alpha(z)}{}]dz = \int_\R h(u)\mathbb{E}[\sigma_u(\alpha,B)]du.
\end{equation}
Now since the field $\alpha$ is stationary we see that for any $z\in \R^2$ we have $\mathbb{E}[h(\alpha(z))\norm{\nabla \alpha(z)}{}]=\mathbb{E}[h(\alpha(0))\norm{\nabla \alpha(0)}{}].$
Let us write $\mu(du,dv,dw)$ the probability measure of $(\alpha(0), \frac{\partial \alpha}{\partial z_1}(0), \frac{\partial \alpha}{\partial z_2}(0))$. As mentioned previously, it is not clear if $\mu$ admits a density with respect to the Lebesgue measure on $\R^3$. However due to Proposition \ref{prop:density_alpha0} we see that $\alpha(0)$ itself has a law which admits a density $\varphi_0$ with respect to the Lebesgue measure on $\R$ (we denote by $\mu_{0}(du)=\varphi_0(u)du$ the probability measure associated to $\alpha(0)$. Since $\R^3$ is a Polish space, we may apply the disintegration theorem (see \cite{fremlin2000measure} for instance) and we see that there exists a collection $(\nu_u)_{u\in \R}$ of measures on $\R^2$ which is defined $\mu_0$ almost everywhere (and unique up to a modification in a set of of measure $0$ for $\mu_0$), such that for any measurable test function $\tilde{h} : \R^3\to \R_+$ we have
\begin{equation}
    \int_{\R^3}\tilde{h}(u,v,w)\mu(du,dv,dw) =\int_\R \left(\int_{\R^2}\tilde{h}(u,v,w)\nu_u(dv,dw)\right)\phi_0(u)du.
\end{equation}
We define for any $u\in \R$,
$$\mathbb{E}\left[\tilde{h}\left(\alpha(0),\frac{\partial \alpha}{\partial x_1}(0), \frac{\partial \alpha}{\partial x_2}(0)\right) \ \middle|\  \alpha(0)=u\right] := \int_{\R^2}\tilde{h}(u,v,w)\nu_u(dv,dw).$$ We can then rewrite
\begin{align*}
    \mathbb{E}&\left[\tilde{h}\left(\alpha(0),\frac{\partial \alpha}{\partial x_1}(0), \frac{\partial \alpha}{\partial x_2}(0)\right)\right] \\
    &= \int_\R \mathbb{E}\left[\tilde{h}\left(\alpha(0),\frac{\partial \alpha}{\partial x_1}(0), \frac{\partial \alpha}{\partial x_2}(0)\right)\ \middle|\  \alpha(0)=u\right]\phi_0(u)du.
\end{align*}
Applying this to $\tilde{h}(u,v,w)=h(u)\sqrt{v^2+w^2}$ we find that
\begin{equation}
\label{eq:24}
    \mathbb{E}[h(\alpha(0))\norm{\nabla \alpha(0)}{}] = \int_\R h(u)\mathbb{E}[\norm{\nabla \alpha(0)}{}\ |\  \alpha(0)=u]\phi_0(u)dx.
\end{equation}
Therefore, by \eqref{eq:21} and \eqref{eq:24} together with stationarity, we see that for any test function $h : \R \to \R_+$ we have
\begin{equation}
    \int_\R h(u)\mathbb{E}[\sigma_u(f,B)]du = \text{Vol}(B)\int_\R h(u)\mathbb{E}[\norm{\nabla \alpha(0)}{}|\alpha(0)=u]\phi_0(u)du.
\end{equation}
In the above $\text{Vol}(B)=1$ denotes the Lebesgue measure of the box $B=[0,1]^2$. By varying the test function $h$, we conclude that there exists a measurable set $\mathcal{L}_0\subset \R$ such that the Lebesgue measure of $\R\setminus \mathcal{L}_0$ is zero and such that,
\begin{equation}
    \forall u\in \mathcal{L}_0,\ \mathbb{E}[\sigma_u(f,B)]= \text{Vol}(B)\mathbb{E}[\norm{\nabla \alpha(0)}{}|\alpha(0)=u]\phi_0(u).
\end{equation}
The above equality is a priori an equality in $\mathbb{R_+}\cup\{+\infty\}.$ We now show that for Lebesgue almost all $u\in \R$ we have $\mathbb{E}[\norm{\nabla \alpha(0)}{}|\alpha(0)=u]\phi_0(u)<\infty$. This will conclude the proof.
Observe by Lemma \ref{lemma:expectation_finite} that we have for all $\ell>0$, $$\mathbb{E}[\norm{\nabla \alpha(0)}{}\mathds{1}_{\alpha(0)>\ell}]<+\infty.$$ This rewrites as
\begin{equation}
    \int_\ell^\infty \mathbb{E}[\norm{\nabla \alpha(0)}{}|\alpha(0)=u]\phi_0(u)du<+\infty.
\end{equation}
Therefore we deduce that for almost all $u>\ell$ we have $$\mathbb{E}[\norm{\nabla \alpha(0)}{}|\alpha(0)=u]\phi_0(u)<+\infty.$$ Taking $\ell=2^{-n}$ and letting $n$ go to infinity concludes the proof in the case $B=[0,1]^2$. For a general bounded $B\subset \R^2$ we may cover $B$ by finitely many boxes of the form $z+[0,1]^2$ and noting that $\sigma_u(\alpha,B)$ can be upper bounded by the sum of the $\sigma_u(\alpha,z+[0,1]^2)$ which all have finite expectation by stationarity.
\end{proof}

\section{Control of the chemical distance}
\label{sec:3}
In this section we prove Theorem \ref{thm:principal}. As mentionned in the introduction the proof can be split into two parts. In the first part we show that if $\ell>\ell_c$, then with high probability there exists a path in $\{\alpha\leq \ell\}$ that goes near $0\in \mathbb{R}^2$ and near a given $x\in \mathbb{R}^2$ and which is of length at most $C\norm{x}{}$ where $C$ is a constant that does not depend on $x$ (but which depends on $\ell$). Then in the second part we use our results in Section \ref{sec:2} obtain paths from $0$ and $x$ to this path $\gamma$ while still controlling the Euclidean length of these connections.

\subsection{Construction of a global structure}
We begin by explaining how to obtain a path $\gamma$ going near two points $0$ and $x$ while still controlling this distance. For this we use the strategy developed in \cite{ARS14} which crucially relies on a renormalization argument.
First of all, we introduce some definitions.
\begin{definition}
\label{def:global_structure}
Let $z\in \R^2$, $\varepsilon>0$, $\ell>0$ and $C>0$. We define the event $\mathcal{G}(z,\varepsilon,C,\ell)$ as the event that there exists a continuous and rectifiable path $\gamma : [0,1]\to \R^2$ such that $\gamma$ takes values in $\{\alpha\leq \ell\}$ with $\norm{\gamma(0)-0}{}\leq \norm{z}{}^\varepsilon$, $\norm{\gamma(1)-z}{}\leq \norm{z}{}^\varepsilon$ and such that the Euclidean length of $\gamma$ is at most $C\norm{z}{}.$
\end{definition}
We aim to prove the following result:
\begin{proposition}
\label{prop:global_struct_high_prob}
For any $\ell>\ell_c$, there exists a constant $C>0$ such that for any $\varepsilon>0$ we have a constant $c>0$ such that when $\norm{z}{}\geq 1$ then
\begin{equation}
    \Proba{\mathcal{G}(z,\varepsilon,C,\ell)}\geq 1-\frac{1}{c}2^{-c2^{\sqrt{\log{\norm{z}{}}}}}.
\end{equation}
\end{proposition}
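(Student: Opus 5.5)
We follow the renormalization scheme of \cite{ARS14} (as adapted in \cite{Vernotte1}). The key point is that the path in $\mathcal{G}$ only has to pass within distance $\norm{z}{}^\varepsilon$ of its endpoints. So we never control lengths inside a single small box; we only concatenate crossings at mesoscopic scales, whose lengths we bound by a deterministic multiple of the scale. Such a bound does not exist for a crossing in a continuum box, so we must build it into the definition of a good box.

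\textbf{Good boxes.} Fix $\ell'$ with $\ell_c<\ell'<\ell$; we may also simply work at level $\ell$. For a scale $L$ and a box $B=Lx+[0,L]^2$ we call $B$ \emph{$L$-good} if the following hold:
\begin{itemize}
    \item the four rectangles of size $3L\times L$ (and $L\times 3L$) around $B$ are crossed in the long direction inside $\{\alpha\leq \ell\}$;
    \item these crossings are connected to each other within the $3L$-neighbourhood;
    \item the crossings can be chosen with Euclidean length at most $K L$, for some fixed constant $K$.
\end{itemize}
By Proposition~\ref{prop:high_prob_cross} and a union bound, the first two conditions fail with probability at most $Ce^{-cL}$.

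The length condition is handled recursively, as in \cite{ARS14}. We set $L_{k+1}=\ell_k L_k$ with $\ell_k$ growing like $2^{k}$ or similar, so that $L_k\approx 2^{k^2/2}$. A box at scale $L_{k+1}$ is declared good if, among its $\ell_k^2$ sub-boxes of scale $L_k$, the bad ones can be covered by at most two sub-boxes of the form $3L_k$. Outside these, a crossing is obtained by chaining good sub-box crossings, each of length $\leq K_kL_k$. This gives length at most $K_{k+1}L_{k+1}$ with $K_{k+1}=K_k(1+O(1/\ell_k))$. Since $\prod_k(1+O(1/\ell_k))<\infty$, the constant $C=\sup_k K_k$ is finite.

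\textbf{Decorrelation.} The probability recursion is $p_{k+1}\leq \ell_k^{c}\,p_k^{2}+(\text{decorrelation error})$. Two sub-boxes at distance at least $L_k$ are almost independent. Using the white-noise representation $f=q\ast W$, we truncate $q$ to a ball of radius $L_k^{1/2}$ to get a field $f_k$ with finite range. Assumption~\ref{a:a1}(3) with $\beta>5/2$, together with the Borell-TIS inequality, gives $\Proba{\norm{f-f_k}{\mathcal{C}^1(\text{box})}>L_k^{-\delta}}$ super-polynomially small. For $\alpha$, Lemma~2.6 of \cite{Shadow2} ensures that with overwhelming probability the supremum defining $\alpha$ is attained at $t\leq L_k^{1/2}$. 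Hence $\alpha$ is itself approximately finite-range. The price is a sprinkling in the level, from $\ell-\sum_k L_k^{-\delta}$ up to $\ell$. Iterating the recursion yields $p_k\leq 2^{-c2^{k}}$.

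\textbf{Conclusion, and the main obstacle.} Given $z$, choose $k$ with $L_k\approx\norm{z}{}^{\varepsilon}$. Then $2^{k}\approx 2^{\sqrt{c\log\norm{z}{}}}$, which yields the stated bound after adjusting $c$. Consider the $O(\norm{z}{}/L_k)$ scale-$L_k$ boxes along the segment $[0,z]$. At scale $L_k$ the chaining argument is applied once more at the top level, bad boxes having already been absorbed by the recursion. If all the boxes of this chain are good, concatenating their crossings gives a path $\gamma\subset\{\alpha\leq\ell\}$ from $B(0,\norm{z}{}^\varepsilon)$ to $B(z,\norm{z}{}^\varepsilon)$. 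Its length is at most $C L_k\cdot(\norm{z}{}/L_k)=C\norm{z}{}$.

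The main obstacle is the seed of the length bound. The initial scale $L_0$ must be chosen so that a crossing of length at most $K_0L_0$ exists with probability close to $1$. We would obtain this from Proposition~\ref{prop:kac_rice_alpha}, for $\ell$ in the full-measure set $\mathcal{L}$. The boundary of the crossing cluster lies in $\{\alpha=\ell\}$, and by Proposition~\ref{prop:good_level_set} it is piecewise $\mathcal{C}^1$. A crossing may be taken along it plus straight segments through the interior, so its length is bounded by $\sigma_\ell(\alpha,B)+O(L_0)$. Markov's inequality on $\mathbb{E}[\sigma_\ell(\alpha,B)]<\infty$ then gives the seed estimate. If we want this to hold for all $\ell>\ell_c$, we would instead apply it at a nearby level in $\mathcal{L}\cap]\ell_c,\ell[$, using monotonicity of $\{\alpha\leq\ell\}$ in $\ell$.
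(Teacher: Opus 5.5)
\textbf{Gap in the length recursion.} Your overall architecture matches the paper's. Both use a multiscale renormalization in the style of \cite{ARS14}, finite-range approximations of $\alpha$ with sprinkling in the level, recursively defined good events with doubly exponential decay, and a top-level chain of $O(\norm{z}{}/L_k)$ boxes of size $L_k\approx\norm{z}{}^\varepsilon$. The problem is how you carry the length bound through the scales. You define good boxes by crossings of $3L\times L$ rectangles of length at most $KL$. You then assert that chaining good sub-box crossings gives $K_{k+1}=K_k(1+O(1/\ell_k))$, but this does not follow. Crossings of overlapping rectangles need not meet, so you must glue them with transverse crossings or walk along the loops they form. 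Each such piece may use its whole length budget while advancing only $O(L_k)$. For instance, alternate horizontal crossings of $[2iL_k,(2i+3)L_k]\times[0,L_k]$ with vertical crossings of $[(2i+2)L_k,(2i+3)L_k]\times[-L_k,2L_k]$. This costs up to $2K_kL_k$ per displacement $2L_k$, so a crossing of a $3L_{k+1}\times L_{k+1}$ rectangle costs up to $3K_kL_{k+1}$. The bound you can actually prove is therefore $K_{k+1}\le cK_k$ with a fixed $c>1$. Over $k\asymp\sqrt{\log\norm{z}{}}$ scales this gives length $\norm{z}{}e^{O(\sqrt{\log\norm{z}{}})}$, not $C\norm{z}{}$.

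\textbf{What the paper does instead.} All length bookkeeping is done at the seed scale only. The recursive events (for every far-apart pair of sub-boxes, one is good, plus the sprinkling events $\mathcal{B}^{(n)}_u$) give, via \cite[Lemma 5.3]{ARS14}, a nearest-neighbour chain $v_1,\dots,v_m$ in $\mathbb{L}_0$ of good seed boxes with
\[
m\le (k+2)\prod_{j<N}\mu_j\Bigl(1+\frac{c\sigma_j}{\mu_j}\Bigr)\le C_1\norm{z}{}.
\]
The only per-scale overhead is the detour around one bad cluster, which is exactly the factor $1+O(\sigma_j/\mu_j)$. Seed loops at neighbouring lattice points automatically intersect, so concatenation costs nothing. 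The length is at most $mC_0\le C_0C_1\norm{z}{}$, with the seed constant $C_0$ paid once rather than once per scale.

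\textbf{The seed.} Your Kac--Rice route is valid but unnecessary, and it is not really the main obstacle. It combines Proposition~\ref{prop:kac_rice_alpha}, Markov's inequality, and the bound of \cite[Lemma 3.2]{Vernotte1} by $\sigma_\ell+O(L_0)$, at a level in $\mathcal{L}\cap]\ell_c,\ell[$. Since the seed scale is fixed, its length constant may depend on it arbitrarily. The paper therefore uses a soft argument: by Proposition~\ref{prop:good_level_set} and the segment-with-detours construction, a separating loop in $\{\alpha\le\ell'\}$, if one exists, can a.s.\ be taken piecewise $\mathcal{C}^1$, hence of finite length. Then $\Proba{\text{loop of length}\le C}\uparrow\Proba{\text{loop exists}}$ as $C\to\infty$. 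This yields Proposition~\ref{prop:seed_event} for every $\ell>\ell_c$ without Kac--Rice; $\mathcal{L}$ only enters in the proof of Theorem~\ref{thm:principal}.

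\textbf{Truncation.} Your truncation of $q$ at radius $L_k^{1/2}$, with your own Borell--TIS estimate, is a workable substitute for the paper's $\alpha_R$ and Proposition~\ref{prop:alpha_R_good_approximation}.
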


We introduce some tools from \cite{Shadow2}.
\begin{definition}
    Let $\chi : \R^2 \to [0,1]$ be a smooth $\mathcal{C}^\infty$ function such that $\chi(z)=1$ for all $\norm{z}{}\leq \frac{1}{4}$ and $\chi(z)=0$ for all $\norm{z}{}\geq \frac{1}{2}$ and such that $\norm{\nabla \chi(z)}{}\leq 10$ for all $z\in \R^2$. Let $R\geq 1$. We define for $z\in \R^2$, $\chi_R(z) := \chi(z/R)$ and we define the field $f_R$ by
    \begin{equation}
        \forall z\in \R^2, f_R(z):= ((q\chi_R)\ast W)(z),
    \end{equation}
    where we recall that the field $f$ was defined by $f(z) = (q\ast W)(z).$
    We also define the field $\alpha_R$ by
    \begin{equation}
        \forall z \in \R^2, \alpha_R(z) := \sup_{0<r<R}\frac{f_R(z+re_1)-f_R(z)}{r}.
    \end{equation}
\end{definition}
It appears that the field $\alpha_R$ is a good local approximation of $\alpha$.
\begin{proposition}[{{\cite[Corollary 2.9]{Shadow2}}}]
\label{prop:alpha_R_good_approximation}
If $f$ satisfies Assumption \ref{a:a1} for some $\beta>1$, then for any $0<b<\min\left(2,\frac{2\beta-2}{3}\right)$ there exist constants $C,c>0$ such that
\begin{equation}
    \forall \varepsilon\in ]0,1[,\ \Proba{\exists z \in [0,1]^2,\ |\alpha(z)-\alpha_R(z)|\geq \varepsilon}\leq Ce^{-cR^b\varepsilon^2}.
\end{equation}
\end{proposition}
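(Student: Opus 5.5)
The plan is to split the error $\alpha-\alpha_R$ into a \emph{truncation error}, coming from restricting the supremum to $r<R$, and a \emph{field error}, coming from replacing $f$ by $f_R$. Write
$$\alpha^{<R}(z):=\sup_{0<r<R}\tau_f(r,z),\qquad \alpha^{\geq R}(z):=\sup_{r\geq R}\tau_f(r,z),$$
so that $\alpha=\max(\alpha^{<R},\alpha^{\geq R})$. Then
$$|\alpha(z)-\alpha_R(z)|\leq |\alpha(z)-\alpha^{<R}(z)|+|\alpha^{<R}(z)-\alpha_R(z)|.$$
I will show that each term exceeds $\varepsilon/2$ somewhere on $[0,1]^2$ only with probability at most $Ce^{-cR^b\varepsilon^2}$. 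Since $\varepsilon<1$, constants and polynomial prefactors in $R$ can be absorbed by slightly lowering $b$.

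\textbf{Truncation term.} Consider the event $E_1$ that $|f(w)-f(z)|\leq \varepsilon r/4$ for all $z\in[0,1]^2$, all $r\geq R/2$ and $w=z+re_1$. On $E_1$ we have $|\tau_f(r,z)|\leq \varepsilon/4$ for every $r\geq R/2$. This gives $\alpha^{\geq R}\leq\varepsilon/4$, and also $\alpha^{<R}\geq \tau_f(3R/4,z)\geq-\varepsilon/4$. Since $|\max(a,b)-a|\leq (b-a)_+$, it follows that $|\alpha-\alpha^{<R}|\leq \varepsilon/2$.

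To bound $\Proba{E_1^c}$, cover $\{r\geq R/2\}$ by the unit blocks $[n,n+1]\times[0,1]$. On each block, $f$ is a stationary, centered, $\mathcal{C}^2$ Gaussian field with bounded variance and bounded expected supremum. Borell--TIS then gives that $\sup|f|\geq \varepsilon n/8$ on the block with probability at most $Ce^{-c\varepsilon^2n^2}$. Applying the same bound to $f$ on $[0,1]^2$ with $n\geq R/2$ and summing over $n\geq R/2$ yields $\Proba{E_1^c}\leq Ce^{-c\varepsilon^2R^2}$. This is the source of the restriction $b<2$.

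\textbf{Field term.} Set $g:=f-f_R=(q(1-\chi_R))\ast W$. For $0<r<R$ we have $|\tau_f(r,z)-\tau_{f_R}(r,z)|=|\tau_g(r,z)|\leq \sup|\partial_1 g|$, the supremum being taken over $[0,R+1]\times[0,1]$. Hence $|\alpha^{<R}-\alpha_R|\leq\sup_{[0,R+1]\times[0,1]}|\partial_1 g|$, and it suffices to bound this supremum.

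The field $g$ is stationary, and $\partial^\gamma g=(\partial^\gamma(q(1-\chi_R)))\ast W$. Its variance is $\norm{\partial^\gamma(q(1-\chi_R))}{L^2}^2$. By item 3 of Assumption \ref{a:a1}, together with $|\nabla\chi_R|\leq 10/R$, this is at most $C\int_{\norm{x}{}\geq R/4}\norm{x}{}^{-2\beta}dx\leq CR^{2-2\beta}$ for $|\gamma|\leq 2$. So both $\partial_1 g$ and its gradient have standard deviation $\sigma_R\lesssim R^{1-\beta}$.

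On each unit block, Dudley's bound (or a Kolmogorov-type bound using the gradient) gives $\mathbb{E}\sup|\partial_1 g|\lesssim\sigma_R$. Borell--TIS then gives a probability at most $Ce^{-c\varepsilon^2R^{2\beta-2}}$ of exceeding $\varepsilon/2$ on that block, once $R$ is large enough that $\varepsilon/2-C\sigma_R\geq\varepsilon/4$. Small $R$ is handled by enlarging $C$, but the bound must hold uniformly in $\varepsilon\in]0,1[$. For $\varepsilon\lesssim R^{1-\beta}$ the claimed bound is trivial, since then $R^b\varepsilon^2\leq R^{b+2-2\beta}$ is bounded because $b\leq 2\beta-2$; the remaining $\varepsilon$ are handled as above. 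A union bound over the $R+1$ blocks costs a factor $R$, which is absorbed into the exponent at the price of lowering $b$ below $2\beta-2$. The exponent $(2\beta-2)/3$ in the statement is weaker than what this yields, which leaves room for a cruder handling of the supremum.

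\textbf{Main obstacle.} The delicate step is the uniformity in $\varepsilon$ in the field term. Borell--TIS requires $\varepsilon$ to dominate $\mathbb{E}\sup|\partial_1 g|$, which forces the split into small and large $\varepsilon$ described above. I would first try the direct argument sketched here, and fall back on a chaining bound with an $\varepsilon$-dependent mesh if the expected-supremum estimate needs more care.
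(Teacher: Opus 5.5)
Your proof is correct. The paper has no proof of its own to compare with: it imports this statement as Corollary 2.9 of \cite{Shadow2}, so what you give is a self-contained route.

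You split the error into the truncation part $\alpha-\alpha^{<R}$ and the field part $\alpha^{<R}-\alpha_R$.
\begin{itemize}
\item The truncation part is handled by Borell--TIS on unit blocks of the half-strip. This is the same mechanism the paper uses for the event $\mathcal{B}^u_x$ in the proof of Lemma~\ref{lemma:expectation_finite}.
\item The mean value theorem reduces the field part to $\sup|\partial_1(f-f_R)|$ over $[0,R+1]\times[0,1]$. The standard deviation of this field and of its gradient is $O(R^{1-\beta})$, by item 3 of Assumption~\ref{a:a1} for derivatives of order at most two. Dudley's bound and Borell--TIS then apply.
\end{itemize}
Your route actually gives more than the quoted statement: every $b<\min(2,2\beta-2)$ works. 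That range contains $]0,\min(2,\frac{2\beta-2}{3})[$, so the factor $\frac13$ in the cited exponent is not needed for this estimate.

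Three points to tighten when writing it up.
\begin{itemize}
\item \textbf{Constants in $E_1$.} The point $w=z+re_1$ can lie in the block $[n,n+1]\times[0,1]$ with $n$ as large as $r+1$. Thresholds $\varepsilon n/8$ on the blocks, combined with the bound on $\sup_{[0,1]^2}|f|$, therefore overshoot $\varepsilon r/4$ slightly. Thresholds such as $\varepsilon n/16$ and $\varepsilon R/16$ fix this.
\item \textbf{Absorbing prefactors.} This does not follow from $\varepsilon<1$. Restrict to $R^b\varepsilon^2\geq K$ with $K$ large, since otherwise the bound is trivial.
  \begin{itemize}
  \item This restriction forces $\varepsilon R\gtrsim 1$ and $\varepsilon\geq\sqrt{K}R^{1-\beta}$, so the Borell--TIS thresholds exceed the expected suprema.
  \item It also gives $\varepsilon^2R^2\geq KR^{2-b}$ and $\varepsilon^2R^{2\beta-2}\geq KR^{2\beta-2-b}$. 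These dominate $\log R$, the factor $R+1$ from the block union bound, and the factor $(\varepsilon^2R)^{-1}\leq R^{b-1}/K$ from $\sum_{n\geq R/2}e^{-c\varepsilon^2n^2}$.
  \end{itemize}
  This is exactly where the strict inequalities $b<2$ and $b<2\beta-2$ are used.
\item \textbf{Tail of $q(1-\chi_R)$.} The tail bound needs $R\geq 4$, so that the support of $1-\chi_R$ avoids the unit ball. It also needs $|\nabla^2\chi_R|=O(R^{-2})$, which comes from the smoothness of $\chi$ and not from the stated gradient bound.
\end{itemize}
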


\begin{definition}
\label{def:eventARLC}
Let $\ell\in \mathbb{R}_+^*$ and $R\geq 1$ and $C>0$. We say that the event $\mathcal{A}(R,\ell,C)$ holds, if there exists a continuous rectifiable loop of Euclidean length at most $C$ which separates the inner boundary from the outer boundary of the annulus $[-R,2R]^2\setminus [0,R]^2$ and such that this loop takes values in $\{\alpha_R\leq \ell\}\cap [-R,2R]^2\setminus [0,R]^2$.
\end{definition}
We need the following proposition
\begin{proposition}
\label{prop:seed_event}
Suppose that Assumption \ref{a:a1} is verified.
For any $\ell>\ell_c$ and any $\delta>0$, there exists $R_0\geq 1$ such that for any $R\geq R_0$ there exists $C>0$ (depending a priori on $R$) such that
\begin{equation}
    \Proba{\mathcal{A}(R,\ell,C)}\geq 1-\delta.
\end{equation}
\end{proposition}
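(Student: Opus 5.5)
Fix $\ell>\ell_c$ and $\delta>0$. The plan is to show three things: with probability close to $1$ a circuit exists in $\{\alpha\leq\ell'\}$ for a level $\ell'$ slightly below $\ell$; replacing $\alpha$ by $\alpha_R$ costs little; and the length of the circuit can be bounded by a finite constant once we allow $C$ to depend on $R$. Choose $\ell'\in]\ell_c,\ell[$ such that $\ell'$ belongs to the full-measure set $\mathcal{L}$ of Proposition \ref{prop:kac_rice_alpha}, which is possible since $\mathcal{L}$ has full measure.

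\textbf{Step 1: an $\alpha$-circuit at level $\ell'$.} By Proposition \ref{prop:high_prob_cross} applied at level $\ell'$ to the four rectangles $[-R,2R]\times[-R,0]$, $[-R,2R]\times[R,2R]$, $[-R,0]\times[-R,2R]$, $[R,2R]\times[-R,2R]$ (long crossings), together with a union bound, the following holds with probability at least $1-4Ce^{-cR}$: each of the four rectangles is crossed lengthwise by a component of $\{\alpha\leq\ell'\}$. Their union contains a connected set separating $[0,R]^2$ from the complement of $[-R,2R]^2$. Hence there exists a component $\Gamma$ of $\{\alpha\leq\ell'\}\cap([-R,2R]^2\setminus]0,R[^2)$ surrounding the inner square.

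\textbf{Step 2: passing from $\alpha$ to $\alpha_R$.} Cover $[-R,2R]^2$ by $9R^2$ unit squares and use stationarity together with Proposition \ref{prop:alpha_R_good_approximation}, with accuracy $\ell-\ell'$. This gives
\[
\mathbb{P}\bigl(\exists z\in[-R,2R]^2,\ |\alpha(z)-\alpha_R(z)|\geq \ell-\ell'\bigr)\leq 9R^2Ce^{-cR^b(\ell-\ell')^2}.
\]
Stationarity is legitimate here because $\alpha_R$ is also stationary, being built from $f_R$. Outside this event, $\{\alpha\leq\ell'\}\subset\{\alpha_R\leq\ell\}$ on the annulus. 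Choose $R_0$ so that the combined failure probability of Steps 1 and 2 is at most $\delta/2$ for all $R\geq R_0$.

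\textbf{Step 3: a rectifiable loop of controlled length.} This is the main obstacle, since connectedness does not give rectifiability. Take the boundary of the region enclosed by the circuit. The boundary of a suitable component of $\{\alpha\leq\ell'\}$ in the annulus consists of arcs of $\{\alpha=\ell'\}$ together with pieces of $\partial[0,R]^2$ and $\partial[-R,2R]^2$. More precisely, consider the unbounded component of the complement of $\Gamma\cup[0,R]^2$ and extract the outer boundary curve of the filled set. Its total length is at most $\sigma_{\ell'}(\alpha,[-R,2R]^2)+16R$. By Proposition \ref{prop:good_level_set} the level set is locally a union of injective piecewise $\mathcal{C}^1$ arcs, so this boundary is a rectifiable loop lying in $\{\alpha\leq\ell'\}$. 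The loop may touch $\partial[0,R]^2$ or the outer boundary, in which case we push it slightly inward (respectively outward) within $\{\alpha_R\leq\ell\}$, using the strict slack $\ell-\ell'$ and the continuity of $\alpha_R$ on the good event.

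Since $\ell'\in\mathcal{L}$, Proposition \ref{prop:kac_rice_alpha} gives $\mathbb{E}[\sigma_{\ell'}(\alpha,[-R,2R]^2)]<\infty$. Markov's inequality then provides $C=C(R)$ such that $\mathbb{P}(\sigma_{\ell'}+16R>C)\leq\delta/2$. Combining this with Steps 1 and 2 yields $\mathbb{P}(\mathcal{A}(R,\ell,C))\geq1-\delta$.

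The delicate point is the topological extraction in Step 3. We need a Jordan-type boundary curve whose length is genuinely controlled by $\mathcal{H}^1$ of the level set. I would handle this by using the local piecewise-$\mathcal{C}^1$ structure to show that $\{\alpha=\ell'\}$ in the compact annulus is almost surely a finite union of such arcs, and then tracing the outer boundary arc by arc.
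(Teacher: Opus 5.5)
Your proof is correct. Steps 1 and 2 are essentially the paper's: you get a circuit at an intermediate level $\ell'\in]\ell_c,\ell[$ by gluing crossings from Proposition \ref{prop:high_prob_cross}, then transfer to $\alpha_R$ via Proposition \ref{prop:alpha_R_good_approximation} and a union bound. Strictly, what you use there is the \emph{joint} stationarity of $(\alpha,\alpha_R)$, which holds because $f$ and $f_R$ are convolutions of the same white noise.

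The genuine difference is Step 3. The paper never bounds the length of the loop. It uses Proposition \ref{prop:good_level_set} and a detour procedure along $\{\alpha=\ell'\}$ to argue that, almost surely, a separating loop in $\{\alpha\le\ell'\}$ can be taken piecewise $\mathcal{C}^1$, hence of finite length. It then gets $C=C(R)$ purely from continuity of measure: the probability of a separating loop of length at most $C$ increases, as $C\to\infty$, to the probability that a separating loop exists. That works for every $\ell'$ and needs none of the Kac--Rice machinery of Section \ref{sec:2}. However, it says nothing about the size of $C(R)$, and it rests on a rather informal finiteness claim for the detour procedure.

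Your route costs the restriction $\ell'\in\mathcal{L}$, which is harmless and not circular. In exchange it gives a deterministic length bound $\sigma_{\ell'}(\alpha,[-R,2R]^2)+16R$, and hence, via Markov and stationarity, an explicit $C(R)$ of order $R^2/\delta$.

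The extraction you flag as delicate can be closed by classical plane topology instead of arc-by-arc tracing. Let $U$ be the unbounded complementary component of $\Gamma$. Its boundary satisfies $\partial U\subset\partial\Gamma\subset\{\alpha=\ell'\}\cup\partial[0,R]^2\cup\partial[-R,2R]^2$, and it is a continuum of finite $\mathcal{H}^1$-measure, hence locally connected. It separates $]0,R[^2$ from infinity. A locally connected plane continuum that separates two points contains a simple closed curve separating them. That Jordan curve has length equal to its $\mathcal{H}^1$-measure, so at most $\mathcal{H}^1(\partial U)$. In this form your argument does not even need Proposition \ref{prop:good_level_set}.

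The final push-off from $\partial[0,R]^2$ and $\partial[-R,2R]^2$ inside the open set $\{\alpha_R<\ell\}$ is fine. A Lipschitz deformation of the annulus into its interior, moving points by at most a small $\eta$, does it, provided you allow a slightly larger $C$.
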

\begin{proof}
Let $\ell'\in \R$ be such that $\ell_c<\ell'<\ell$. We denote by $\tilde{\mathcal{A}}(R,\ell',C)$ the same event as $\mathcal{A}(R,\ell',C)$ where we replace the field $\alpha_R$ by the field $\alpha$ in Definition \ref{def:eventARLC}. We also denote by $\tilde{\mathcal{A}}(R,\ell',\infty)$ the event that there exists a loop in $\{\alpha\leq \ell'\}$ separating the inner boundary from the outer boundary of $[-R,2R]^2\setminus [0,R]^2$. Then, since $\ell'>\ell_c$, by a classical gluing construction and using Proposition \ref{prop:high_prob_cross}, we see that
\begin{equation}
    \Proba{\tilde{\mathcal{A}}(R,\ell',\infty)}\xrightarrow[R\to \infty]{}1.
\end{equation}
Hence, we may find $R_0\geq 1$ such that for any $R\geq R_0$ we have
\begin{equation}
    \Proba{\tilde{\mathcal{A}}(R,\ell',\infty)}\geq 1-\frac{\delta}{4}.
\end{equation}
Now we see by Proposition \ref{prop:good_level_set}, that almost surely if there exists such a loop in $\{\alpha\leq \ell'\}$ then one may assume that this loop if of finite length (in fact one may assume that this loop is a piecewise $\mathcal{C}^1$ loop). Indeed this relies on the more general following principle. Given two distincts points $a,b\in \R^2$ that we want to connect by a path in $\{\alpha\leq \ell'\}$, we may try first to connect them by a straight segment. However, it may be the case that this segment is not included in $\{\alpha\leq \ell\}$. We may define $z_0$ as the closest point to $b$ of that segment such that the sub-segment $[a,z_0]$ is included in $\{\alpha\leq \ell'\}$. By continuity we have $\alpha(z_0)=\ell'$. By applying Proposition \ref{prop:good_level_set} we see that we may make a detour and follow a piecewise $\mathcal{C}^1$ boundary (in $\{\alpha=\ell'\}$) until we reach the segment $[a,b]$ again at which point we continue to go straight, until we reach the next point $z_1$ defined similarly. We continue to iterate this procedure until we reach $b$. Compacity of the segment $[a,b]$ together with Proposition \ref{prop:good_level_set} ensures that this procedure ends in a finite number of steps if $a$ and $b$ were connected in $\{\alpha\leq \ell'\}$. The path obtained is a connection between $a$ and $b$ in $\{\alpha\leq \ell'\}$ and is of finite length (it is a piecewise $\mathcal{C}^1$ path).

We now continue the proof of Proposition \ref{prop:seed_event}. By the previous argument we have
\begin{equation}
    \Proba{\exists C\in ]0,\infty[, \tilde{\mathcal{A}}(R,\ell',C)}=\Proba{\tilde{\mathcal{A}}(R,\ell',\infty)}.
\end{equation}
Therefore, we see that
\begin{equation}
    \Proba{\tilde{\mathcal{A}}(R,\ell,C)}\xrightarrow[C\to \infty]{}\Proba{\tilde{\mathcal{A}}(R,\ell,\infty)}.
\end{equation}
By taking $C$ big enough (depending a priori on $R\geq R_0$) we find that
\begin{equation}
    \forall R\geq R_0,\ \exists C>0,\ \Proba{\tilde{\mathcal{A}}(R,\ell',C)}\geq 1-\frac{\delta}{2}
\end{equation}
If we now assume that $\alpha$ and $\alpha_R$ differ by at most $\ell-\ell'$ on $[-R,2R]^2$, then a loop in $\{\alpha\leq \ell'\}$ of length less than $C$ is actually also a loop of $\{\alpha_R\leq \ell\}$ of length less than $C$. To obtain the conclusion it is therefore enough to show that the probability that there exists some point $z\in [-R,2R]^2$ such that $\alpha(z)$ and $\alpha_R(z)$ differ by more than $\ell-\ell'$ goes to $0$ when $R$ goes to infinity. This is a direct consequence of the estimate in Proposition \ref{prop:alpha_R_good_approximation} together with an union bound since
\begin{equation}
    \Proba{\exists z \in [-R,2R]^2,\ |\alpha(z)-\alpha_R(z)|>\ell-\ell'}\leq C'R^2e^{-cR^b(\ell-\ell')^2},
\end{equation}
for some constants $C',c,b>0.$
\end{proof}

We now provide the proof of Proposition \ref{prop:global_struct_high_prob}.
\begin{proof}[Proof of Proposition \ref{prop:global_struct_high_prob}.]
The proof is based on a technical renormalization argument. It is to be noted that this type of renormalization argument has already been used in a variety of contexts (see for instance \cite{Szn12}, \cite{ARS14}, \cite{DCGRS23}, \cite{DCRRV23}, \cite{Shadow2} for an incomplete list). We use the notations introduced in \cite{Shadow2}.

Let $\lambda_0\in \mathbb{N}^*$ to be fixed later.
For $n\geq 0$ we define three sequences $(\sigma_n)_n, (\lambda_n)_n, (\mu_n)_n$ of positive integers as follows :
$$
\begin{cases}
    \sigma_n &= 100\\
    \mu_n &= 10^5\cdot 2^n \\
    \lambda_n &= \lambda_{0}\mu_0^n2^{n(n-1)/2}
\end{cases}
$$
We remark that we have in particular the relation
$$\forall n\in \N,\ \lambda_{n+1}=\lambda_n\mu_n.$$
Let $\varepsilon_0>0$ be fixed.

Denote by $\mathbb{L}_n$ the lattice $\mathbb{L}_n := \lambda_n \mathbb{Z}^2.$ When $u\in \mathbb{L}_n$ we define $\Lambda_n(u) := (u+[0,\lambda_n[)^2\cap \mathbb{L}_{n-1}$. We remark that $\Lambda_n(u)$ contains exactly $\mu_{n-1}^2$ points.

Let $\ell>\ell_c$ and let $\ell'$ and $\ell'$ be such that $\ell_c<\ell'<\ell''<\ell$. According to Proposition \ref{prop:alpha_R_good_approximation} and Proposition \ref{prop:seed_event} we may find a constant $R_0\geq 1$ such that for any $R\geq R_0$ there exists a constant $C(R,\varepsilon_0, \ell')>0$ such that, with probability at least $1-\varepsilon_0$, there exists a rectifiable loop in $\{\alpha_R\leq \ell'\}$ of length less than $C(R,\varepsilon_0,\ell')$ that separates the inner boundary and the outer boundary of $[-R,2R]^2\setminus [0,R]^2.$ In the following we choose $\lambda_0\geq R_0$ and we let $C_0=C_0(\lambda_0,\varepsilon_0,\ell')>0$ be the constant associated to $\lambda_0$. For $u\in \mathbb{L}_0$ we denote by $\mathcal{A}^{(0)}_u$ the event that there exists a rectifiable loop in $\{\alpha_{\lambda_0}\leq \ell'\}$ that separates the inner boundary from the outer boundary of $u+[-\lambda_0,2\lambda_0]^2\setminus [0,\lambda_0]^2$ and which is of Euclidean length less than $C_0$.
By stationarity, it holds that for any $u\in \mathbb{L}_0$ we have
\begin{equation}
    \label{eq:seed_event}
    \Proba{\mathcal{A}^{(0)}_u}\geq 1-\varepsilon_0.
\end{equation}

For $n\in \mathbb{N}^*$ and $u\in \mathbb{L}_n$ we introduce the event $\mathcal{B}^{(n)}_u$ which is defined as the event that on the whole box $u+[-\lambda_n,2\lambda_n]^2$, we have $$|\alpha_{\lambda_n}-\alpha_{\lambda_{n-1}}|\leq \frac{\min(\ell''-\ell',1)}{2^n}.$$
By the estimate from Proposition \ref{prop:alpha_R_good_approximation} an by an union bound we find that for $0<b<\frac{2\beta-2}{3}$ we have constants $c,c'>0$ such that
\begin{equation*}
    \forall n\in \mathbb{N}^*,\ \forall u \in \mathbb{L}_n,\ \Proba{\mathcal{B}^{(n)}_u}\geq 1 - c\lambda_n^2e^{-c'4^{-n}\lambda_{n-1}^b}.
\end{equation*}
By taking $\lambda_0$ big enough we find that
\begin{equation}
    \label{eq:auxiliary_event}
    \forall n\in \mathbb{N}^*,\ \forall u \in \mathbb{L}_n,\ \Proba{\mathcal{B}_u^{(n)}}\geq 1-2^{-\frac{1}{\varepsilon_0}2^{n}}.
\end{equation}
In the following we now fix $\lambda_0$ (big enough depending on $\varepsilon_0$) such that \eqref{eq:seed_event} and \eqref{eq:auxiliary_event} hold. Note that the constant $C_0$ now only depends on $\varepsilon_0$.

Now we recursively define the events $\mathcal{A}^{(n)}_u$ the following way,
$$\forall n\in \mathbb{N}^*,\ \forall u\in \mathbb{L}_n,\ \mathcal{A}^{(n)}_u := \mathcal{B}^{(n)}_u \cap \bigcap_{\substack{u_1,u_2 \in \Lambda_n(u)\\ \norm{u_1-u_2}{\infty}\geq 5\sigma_{n-1} \lambda_{n-1}}}(\mathcal{A}^{(n-1)}_{u_1}\cup \mathcal{A}^{(n-1)}_{u_2}),$$
where the big intersection is taken over all  $(u_1,u_2)\in \Lambda_n(u)$ such that the supremum norm of $u_1-u_2$ is at least $5\sigma_{n-1}\lambda_{n-1}.$

According to Proposition 3.10 in \cite{Shadow2}, we may choose $\varepsilon_0$ small enough (depending only on the sequence $\mu$ which is already fixed and independent from anything else), and we have
\begin{equation}
    \label{eq:An_event}
    \forall n\in \mathbb{N}^*,\ \forall u\in \mathbb{L}_n, \ \Proba{\mathcal{A}^{(n)}_u}\geq 1-2^{-2^n}.
\end{equation}
Recall that now that $\varepsilon_0$ is fixed, then so are the constants $\lambda_0$ and $C_0$.

We now let $x\geq (2\lambda_0)^{\frac{1}{\varepsilon}}$ and $z=(x,0)$ (the more general case $z\in \R^2$ can be dealt with in the same manner). Let $\varepsilon\in ]0,1[$ be fixed. Let $N(x)\in \mathbb{N}$ be the biggest integer such that $\lambda_{N(x)}\leq \frac{1}{2}x^\varepsilon.$ Note that due to explicit expression of $\lambda_n$ we see that we have $N(x)\geq c\sqrt{\ln(x)}$ for some small constant $c>0$ depending on $\varepsilon$. We consider the points $u_k = (k\lambda_{N(x)},0)\in \mathbb{L}_{N(x)}$ for $-1\leq k\leq \frac{x}{\lambda_{N(x)}}+1.$ We note that there is less than $x$ such integers $k$.
Let $\mathcal{G}_x$ be the event that all $\mathcal{A}^{N(x)}_{u_k}$ occur for $-1\leq k \leq \frac{x}{N(x)}+1.$
By \eqref{eq:An_event} and an union bound and adjusting constants, we find that there exists $c>0$ such that
\begin{equation}
    \label{eq:event_gx}
    \Proba{\mathcal{G}_x}\geq 1-\frac{1}{c}2^{-c2^{\sqrt{\log(\norm{z}{})}}}.
\end{equation}
We claim that on the event $\mathcal{G}_x$ the event $\mathcal{G}(z,\varepsilon, C', \ell)$ occurs (with some constant $C'$ that will depend on the constant $C_0>0$ which was fixed in terms of $\varepsilon_0$ and is independent from $x$).
In fact, according to Lemma 5.3 in \cite{ARS14} if all the events $\mathcal{A}^{N(x)}_{u_k}$ occur, then we may find a nearest neighbor path $v_1,\dots,v_m$ in $\mathbb{L}_0$ such that :
\begin{enumerate}
    \item $\norm{v_1-0}{}\leq 2\lambda_{N(x)} \leq x^\varepsilon,$
    \item $\norm{v_m-z}{}\leq 2\lambda_{N(x)}\leq x^\varepsilon$
    \item For each $1\leq i \leq m$, there exists a sequence $(v_i^l)_{1\leq l\leq N(x)}$ such that $v_i^l\in \mathbb{L}_l$ with $v_i\in v_i^l+[0,\lambda_l[^2$ and such that all the events $\mathcal{B}^{(l)}_{v_i^l}$ occurs (for $1\leq i \leq m$ and $1\leq l \leq n$).
    \item For each $1\leq i \leq m$, the event $\mathcal{A}^{(0)}_{v_i}$ occurs.
    \item There exists a universal constant $c>0$ such that $$m\leq (k+2)\prod_{j=0}^{N(x)-1}\mu_j\left(1+\frac{c\sigma_j}{\mu_j}\right).$$
\end{enumerate}
Note by that in the fifth point we have by our definition of $\mu_j$ and $\sigma_j$ that $\prod_{j=0}^\infty \left(1+\frac{c\sigma_j}{\mu_j}\right)<+\infty.$
Moreover we have $k+2\leq \frac{2x}{\lambda_{N(x)}}$ and $\prod_{j=0}^{N(x)-1}\mu_j = \frac{\lambda_{N(x)}}{\lambda_0}.$
Therefore we have found a constant $C_1$ depending only on $\lambda_0$ (which is fixed) such that
\begin{equation}
    m\leq C_1x.
\end{equation}
Now observe that all the events $\mathcal{A}^{(0)}_{v_i}$ occur for $0\leq i \leq m$. In particular, for every such $i$ there exists a loop $\gamma_i$ in $\{\alpha_{\lambda_0}\leq \ell'\}$ of length less than $C_0$ which is included in a box of scale $\lambda_0$ centered around $v_i$. Since $v_i$ and $v_{i+1}$ are two neighbours in $\mathbb{L}_0$ then the two curve $\gamma_i$ and $\gamma_{i+1}$ intersect each other. Therefore, we obtain a macroscopic connection, that is, a continuous path in $\{\alpha_{\lambda_0}\leq \ell'\}$ that starts at distance less than $x^\varepsilon$ of the origin and ends at distance less than $x^\varepsilon$ of $z$ and such that the length of this path is at most $mC_0\leq C_0C_1x$ where $C_0>0$ was the constant introduced in the definition of the events of the form $\mathcal{A}^{(0)}_u$. Moreover this path is included in the union of the boxes of the form $v_i+[-\lambda_0,2\lambda_0]^2$. By the occurrence of the events $\mathcal{B}_{v_i^l}^{(l)}$, we deduce that this path which is included in $\{\alpha_{\lambda_0}\leq \ell'\}$ is also included in $\{\alpha_{\lambda_{N(x)}}\leq \ell''\}$. However by Proposition \ref{prop:alpha_R_good_approximation}, we may bound from above the probability that $\alpha$ and $\alpha_{\lambda_{N(x)}}$ differ by more than $\ell-\ell''$ in the union of the boxes of the form $v_i+[-\lambda_0,2\lambda_0]^2$. More precisely, there is a constant $c>0$ (depending on $\lambda_0$ and $\ell-\ell'$) such that this probability is at most
$$\frac{1}{c'}e^{-c'\lambda_{N(x)}^b}\leq \frac{1}{c}2^{-c2^{\sqrt{\log(x)}}}.$$
Together with \eqref{eq:event_gx} we find a constant $c>0$ such that
\begin{equation}
    \Proba{\mathcal{G}(z,\varepsilon,C_0C_1, \ell)} \geq 1-\frac{1}{c}2^{-c2^{\sqrt{\log(\norm{z}{})}}}, 
\end{equation}
which concludes the proof in the case $z=(x,0)$ and $x\geq \lambda_0$. Adjusting constants we easily show that this also holds for $x\geq 1$. And a completely similar argument shows that this holds for a general $z\in \R^2$ with $\norm{z}{}\geq 1.$
\end{proof}

\subsection{Proof of Theorem \ref{thm:principal}}
We now provide the proof of our main theorem.
\begin{proof}[Proof of Theorem \ref{thm:principal}]
Let $\ell>\ell_c$ such that $\ell\in \mathcal{L}$ where $\mathcal{L}$ is the set of full measure given by Proposition \ref{prop:kac_rice_alpha}. We will find a constant $\tilde{C}>0$ such that if $z\in \R^2$ is connected to $0\in \R^2$ by a path in $\{\alpha\leq \ell\}$ then, with high probability, this connection can actually be made of Euclidean length less than $\tilde{C}\norm{z}{}$. Let $\varepsilon\in ]0,\frac{1}{2}[$ be fixed. Let $C>0$ be the constant given by Proposition \ref{prop:global_struct_high_prob} such that the event $\mathcal{G}(z,\varepsilon,C,\ell)$ has high probability when $\norm{z}{}$ goes to infinity. Let $z\in \R^2$ such that $\norm{z}{}\geq 10$. For $r\geq 0$, Let $B_z^1(r)$ and $B_z^2(r)$ be the two following boxes of $\R^2$.
\begin{align*}
    B_z^1(r) &= [-r\norm{z}{}^\varepsilon,+r\norm{z}{}^\varepsilon]^2 \\
    B_z^2&= z+B_x^1.
\end{align*}
Let us denote by $\mathcal{A}^1_z$ (resp. $\mathcal{A}^2_z$) the event that there exists a loop in the annulus $B_z^1(2)\setminus B_z^1(1)$ (resp. $B_z^2(2)\setminus B_z^2(1)$) included in $\{\alpha\leq \ell\}$ that separates the inner boundary from the outer boundary of this annulus. Since $\ell>\ell_c$, then, by classical gluing constructions and Proposition \ref{prop:high_prob_cross}, we have
\begin{equation}
    \label{eq:proba_Aiz}
    \forall i\in \{1,2\},\ \Proba{\mathcal{A}^i_z}\geq 1-\frac{1}{c}e^{-c\norm{z}{}^\varepsilon}.
\end{equation}
When $i\in \{1,2\}$ we also denote by $\mathcal{B}^1_z$ the event

$$\mathcal{B}^i_z := \left\{S^{\{\alpha\leq \ell\}}(B_z^i(2)\cap \{\alpha\leq \ell\})\leq \norm{z}{}\right\},$$
where we recall Definition \ref{def:chem} of $S^E(D)$.

According to Lemma 3.2 in \cite{Vernotte1} we have the following deterministic control
\begin{equation}
    S^{\{\alpha\leq \ell\}}(B_z^i(2)\cap \{\alpha\leq \ell\}) \leq c\norm{z}{}^\varepsilon+\sigma_\ell(\alpha, B_z^i(2)),
\end{equation}
where $c>0$ is a universal constant and where we recall that $\sigma_\ell(\alpha,B)$ denotes the one dimensional Hausdorff measure of $\{\alpha=\ell\}\cap B$ (which coincides with its Euclidean length according to Proposition \ref{prop:good_level_set}).

By Proposition \ref{prop:kac_rice_alpha}, and by covering the box $B_z^i(2)$ by $O(\norm{z}{}^{2\varepsilon})$ boxes of size $1$ we see that there exists a constant $c'>0$ such that
\begin{equation}
    \forall i \in \{1,2\},\ \mathbb{E}\left[S^{\{\alpha\leq \ell\}}(B_z^i(2)\cap \{\alpha\leq \ell\})\right] \leq c'\norm{z}{}^{2\varepsilon}.
\end{equation}
An application of the Markov inequality then yields
\begin{equation}
    \forall i \in \{1,2\},\ \Proba{\mathcal{B}_z^i}\geq 1 - \frac{c'}{\norm{z}{}^{1-2\varepsilon}}.
\end{equation}
Let us denote by $\mathcal{C}_z$ the event that $0$ and $z$ are connected by a path in $\{\alpha\leq \ell\}.$
We claim that, under the event $\mathcal{C}_z\cap \mathcal{G}_z\cap \mathcal{A}_z^1\cap \mathcal{A}_z^2\cap \mathcal{B}_z^1 \cap \mathcal{B}_z^2$, then there exists a path from $0$ to $z$ in $\{\alpha\leq \ell\}$ of length at most $(C+2)\norm{z}{}$. In fact, we know by the event $\mathcal{G}_z$ that there exists a path $\gamma$ in $\{\alpha\leq \ell\}$ such that one extremity of this path belongs to $B_z^1(1)$ and the other belongs to $B_z^2(1).$ By the events $\mathcal{A}_z^1$ and $\mathcal{A}_z^2$ we may find two loops $\gamma_1$ and $\gamma_2$ such that $\gamma_i$ is a loop in $\{\alpha\leq \ell\}$ that separates the inner boundary from the outer boundary of the annulus $B_z^i(2)\setminus B_z^i(1)$. By planarity the path $\gamma$ must intersect the loop $\gamma_1$ at some $z_1$ and the loop $\gamma_2$ at some $z_2$. Under the event $\mathcal{C}_z$ we have a path $\gamma'$ from $0$ to $z$ in $\{\alpha\leq \ell\}$. Again by planarity this path must intersects $\gamma_1$ at some $z_1'$ and $\gamma_2$ at some $z_2'$. We then obtain a path $\gamma''$ from $0$ to $z$ in the following way, we start from $0$ we follow the path $\gamma'$ until we first intersect $\gamma_1$ at $z'_1$ we then move along the loop $\gamma_1$ to reach $z_1$, we then move along the path $\gamma$ to reach $\gamma_2$ at $z_2$, we travel along the loop $\gamma_2$ until we reach $z'_2$ and then follow the path $\gamma'$ to reach $z$.
Under the events $\mathcal{B}_z^1$ and $\mathcal{B}_z^2$ the Euclidean length needed to connect $0$ to $z_1$ and $z$ to $z_2$ by this path is at most $2\norm{z}{}$. On the event $\mathcal{G}_z$, the Euclidean length needed to connect $z_1$ to $z_2$ by $\gamma''$ is at most $C\norm{z}{}$. Therefore, the total length of $\gamma''$ is at most $(2+C)\norm{z}{}$. Doing an union bound we find that the probability that $0$ is connected to $z$ by a path in $\{\alpha\leq \ell\}$ but such a connection cannot be made with a path of Euclidean length less than $(2+C)\norm{z}{}$ is at most of order $\frac{1}{\norm{z}{}^{1-2\varepsilon}}.$ This concludes the proof of Theorem \ref{thm:principal}.
\end{proof}

\bibliography{biblio.bib}

\noindent
David, Vernotte
\\
Univ. Grenoble Alpes, CNRS, IF, 38000 Grenoble, France

\end{document}